\numberwithin{equation}{section}
\numberwithin{figure}{section}
\def\theenumi{\arabic{enumi}}
\def\theenumii{\alph{enumii}}
\def\p@enumii{\theenumi.}
\def\theenumiii{\arabic{enumiii}}
\def\p@enumiii{(\theenumi)(\theenumii)}
\def\p@enumiv{\p@enumiii.\theenumiii}
\newtheorem{theorem}{Theorem}[section]
\newtheorem{lemma}[theorem]{Lemma}
\newtheorem{proposition}[theorem]{Proposition}
\theoremstyle{definition}
\newtheorem{remark}[theorem]{Remark}
\def\dd{\mathrm{d}}
\begin{document}
\title{On the spectrum of asymptotic entropies of random walks}
\begin{abstract}  
Given a random walk on a free group, we study the random walks it
induces on the group's quotients. We show that the spectrum of asymptotic
entropies of the induced random walks has no isolated points, except
perhaps its maximum.
\end{abstract}

\author{Omer Tamuz}
\author{Tianyi Zheng}
\thanks{Omer Tamuz was supported by a grant from the Simons Foundation (\#419427).}
\address{Omer Tamuz, California Institute of Technology, Pasadena, CA 91125.
Email: omertamuz@gmail.com.}
\address{Tianyi Zheng, University of California, San Diego, La Jolla, CA 92093.
Email: tzheng2@math.ucsd.edu.}
\maketitle

\section{Introduction}

Let $G$ be a finitely generated group, and let $\mu$ be a probability
measure on $G$. The $\mu$-random walk on $G$ is a time homogeneous
Markov chain $g_{1},g_{2},\ldots$ on the state space $G$ whose steps
are distributed i.i.d.\,$\mu$: for $g,h\in G$ the transition probability
from $g$ to $h$ is $\mu(g^{-1}h)$. An important statistic of a
random walk is its \emph{Avez Asymptotic Entropy \cite{avez1974theorem}}
\[
h(G,\mu):=\lim_{n\to\infty}\frac{1}{n}H\left(g_{n}\right),
\]
where $H(\cdot)$ is the Shannon entropy. The importance of asymptotic
entropy is due to the fact that it vanishes if and only if every
bounded $\mu$-harmonic function is constant; that is, if the
$\mu$-random walk has a trivial Poisson boundary
\cite{avez1974theorem,kaimanovich1983random}. Moreover, as the
asymptotic entropy is the limit of the mutual information
$I(g_{1};g_{n})$ between the first step of the random walk and its
position in later time periods, it quantifies the extent by
which the random walk fails to have the Liouville property.

Suppose that $G$ has $d$ generators, and let $\mu$ be the symmetric
measure that assigns $1/(2d)$ to each generator and its inverse. The
main question that we ask in this paper is: what possible values of
$h(G,\mu)$ are attained as we vary the group $G$?

To formalize and generalize this question, we consider the following
setting.  Given $G$ and $\mu$, and given a quotient $\Gamma=G/N$, the
induced random walk $g_{1}N,g_{2}N,\ldots$ on $\Gamma$ has step
distribution $\mu_{\Gamma}$, where, for $\gamma=gN$,
$\mu_\Gamma(\gamma) = \mu(gN)$. In other words, $\mu_\Gamma$ is the
push-forward of $\mu$ under the quotient map; we will simply write
$\mu$ instead of $\mu_{\Gamma}$ whenever this is unambiguous. For a
given $G$ and $\mu$, what values can be realized as the asymptotic
random walk entropies of such quotients? This is particularly
interesting when $G$ has many quotients, and we indeed focus on the
case that of $\mathbf{F}_{d}$, the free groups with $d\geq2$
generators.

Given $(G,\mu)$ we denote the spectrum of random walk entropies by
\[
\mathfrak{H}(G,\mu):=\{h(\Gamma,\mu):\ \Gamma\mbox{ is a quotient group of }G\}.
\]
We will consider measures  $\mu$ on $G$ that have
\emph{finite first moment}, that is,
$\sum_{g\in G}\left|g\right|_{S}\mu(g)<\infty$, where
$\left|\cdot\right|_{S}$ is the word length with respect to generating
set $S$. Recall that $\mu$ is \emph{non-degenerate} if its support generates $G$
as a semigroup.

Our main result is the following.
\begin{theorem}\label{main}

Let $\mu$ be a non-degenerate probability measure with finite first
moment on the free group $\mathbf{F}_{d}$, $d\ge2$. Suppose $\Gamma$
is a proper quotient of ${\bf F}_{d}$. Then for any $\epsilon>0$,
there exists a quotient group $\tilde{\Gamma}$ of $\mathbf{F}_{d}$
such that $\mathbf{F}_{d}\twoheadrightarrow\tilde{\Gamma}\twoheadrightarrow\Gamma$
and 
\[
h(\Gamma,\mu)<h(\tilde{\Gamma},\mu)<h(\Gamma,\mu)+\epsilon.
\]
In particular, the set $\mathfrak{H}({\bf F}_{d},\mu)$
has no isolated points, except perhaps its maximum.

\end{theorem}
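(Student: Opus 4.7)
My plan is to let $N := \ker(\mathbf{F}_d \twoheadrightarrow \Gamma)$; because $\Gamma$ is proper, $N$ is a nontrivial normal subgroup of the non-abelian free group $\mathbf{F}_d$, hence by Nielsen--Schreier a free group itself. I would try to produce a sequence of normal subgroups $\tilde N_k \subsetneq N$ of $\mathbf{F}_d$ whose intermediate quotients $\tilde\Gamma_k := \mathbf{F}_d/\tilde N_k$ satisfy both (i) $h_{\rm RW}(\tilde\Gamma_k,\mu) > h_{\rm RW}(\Gamma,\mu)$ for each $k$ and (ii) $h_{\rm RW}(\tilde\Gamma_k,\mu) \to h_{\rm RW}(\Gamma,\mu)$ as $k \to \infty$; choosing $k$ large would then yield the desired $\tilde\Gamma$.

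For the convergence in (ii), I would pick the $\tilde N_k$ converging to $N$ in the Chabauty topology on $\mathrm{Sub}(\mathbf{F}_d)$, meaning $\tilde N_k \cap B_r(\mathbf{F}_d) = N \cap B_r(\mathbf{F}_d)$ for every $r$ and $k$ sufficiently large, where $B_r$ is the word-ball; the Cayley graphs of $\tilde\Gamma_k$ and of $\Gamma$ then agree on balls of radius $\lfloor r/2 \rfloor$. The finite first moment hypothesis confines $g_n$ to a ball of radius $O(n)$ with probability tending to $1$, so that for each fixed $n$ the conditional entropy $H(g_n \bmod \tilde N_k \mid g_n \bmod N)$ tends to $0$ as $k \to \infty$; consequently $H_{\tilde\Gamma_k}(g_n)/n \to H_{\Gamma}(g_n)/n$. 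Combining with Fekete's formula $h_{\rm RW}(G,\mu) = \inf_n \tfrac{1}{n} H(g_n)$ and the data-processing monotonicity $h_{\rm RW}(\tilde\Gamma_k,\mu) \ge h_{\rm RW}(\Gamma,\mu)$ would yield $h_{\rm RW}(\tilde\Gamma_k,\mu) \downarrow h_{\rm RW}(\Gamma,\mu)$.

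The hard part will be (i), the \emph{strict} inequality $h_{\rm RW}(\tilde\Gamma_k,\mu) > h_{\rm RW}(\Gamma,\mu)$. By Kaimanovich's entropy criterion, equality would force the kernel $N/\tilde N_k \trianglelefteq \tilde\Gamma_k$ to act trivially on the Poisson boundary of $(\tilde\Gamma_k,\mu)$; a natural sufficient condition ruling this out is non-amenability of $N/\tilde N_k$. The obstruction is that the canonical characteristic subgroups of $N$ (derived, lower central, verbal) all produce amenable kernels and leave the Poisson boundary unchanged, so a subtler construction is needed---for instance, taking $\tilde N_k$ to be the kernel of an $\mathbf{F}_d$-equivariant surjection from $N$ onto a $d$-generated non-amenable group while simultaneously including $N \cap B_k$ in $\tilde N_k$ to preserve the Chabauty convergence of step (ii). The main technical hurdle I anticipate is producing the non-amenability (equivalently, a bounded $\mu$-harmonic function on $\tilde\Gamma_k$ not descending to $\Gamma$) simultaneously with the Chabauty approximation, particularly when $\Gamma$ is finitely presented so that $N$ is normally finitely generated in $\mathbf{F}_d$ and one cannot simply drop long relations.
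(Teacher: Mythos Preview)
Your outline correctly isolates the two tasks---an upper bound close to $h_{\rm RW}(\Gamma,\mu)$ and a \emph{strict} lower bound---and you rightly see that the latter is the crux. However, the framework you set up for the upper bound cannot be completed, and this is not a matter of a missing lemma but a structural obstruction.

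You want normal subgroups $\tilde N_k \subsetneq N$ of $\mathbf{F}_d$ with $\tilde N_k \to N$ in the Chabauty topology. When $\Gamma$ is finitely presented (as many quotients of interest are), $N$ is the normal closure in $\mathbf{F}_d$ of finitely many relators $r_1,\dots,r_m$. Chabauty convergence forces $r_1,\dots,r_m \in \tilde N_k$ for all large $k$; since $\tilde N_k$ is normal in $\mathbf{F}_d$, this gives $\tilde N_k \supseteq \langle\langle r_1,\dots,r_m\rangle\rangle = N$, hence $\tilde N_k = N$. So for finitely presented $\Gamma$ there is simply \emph{no} sequence of the kind you describe, and the entropy-convergence argument in your step~(ii) cannot get started. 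You note this obstruction in your final sentence, but it is fatal to the plan, not just a technical hurdle.

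The paper sidesteps this by abandoning Chabauty convergence of $\tilde N_k$ to $N$ altogether. Instead it constructs auxiliary $d$-marked groups $(\Gamma_k,S_k)$ converging to $(\mathbf{F}_d,\mathbf{S})$ with $h_{\rm RW}(\Gamma_k,\mu)\to 0$, and sets $\tilde\Gamma = \Gamma \otimes \Gamma_k$ (the diagonal product). The upper bound then comes from entropy \emph{subadditivity}, $h_{\rm RW}(\Gamma\otimes\Gamma_k,\mu) \le h_{\rm RW}(\Gamma,\mu)+h_{\rm RW}(\Gamma_k,\mu)$, not from any approximation of $N$. For the strict lower bound the paper arranges that each $\Gamma_k$ acts \emph{essentially freely} on its Poisson boundary (via the criterion that $\Gamma_k$ has only countably many amenable subgroups and no nontrivial amenable normal subgroup). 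Then a fixed $g\in N\setminus\{1\}$ acts trivially on the Poisson boundary of $\Gamma$ but freely on that of $\Gamma_k$, so the stationary joining cannot collapse to the Poisson boundary of $\Gamma$, and Proposition~\ref{eq} together with Lemma~\ref{mp} force $h_{\rm RW}(\tilde\Gamma,\mu)>h_{\rm RW}(\Gamma,\mu)$. Producing such $\Gamma_k$ (Proposition~\ref{sequence}, built from permutational wreath extensions over the Fabrykowski--Gupta group) is the real work of the paper.

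A secondary point: your proposed sufficient condition for strictness---non-amenability of $N/\tilde N_k$---is not known to suffice. Kaimanovich's theorem gives the implication ``$K$ amenable $\Rightarrow$ entropies equal''; the converse, that equal entropies force the kernel to be amenable, is not established in general. The paper therefore does not rely on non-amenability of the kernel per se, but on the essential freeness of the auxiliary boundary action.
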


It follows from Theorem \ref{main} that if
$\mathfrak{H}(\mathbf{F}_{d},\mu)$ is a closed subset in $\mathbb{R}$,
then it must be the full
interval $\left[0,h({\bf F}_{d},\mu)\right]$.  To the
best of our knowledge, it is not known whether the set
$\mathfrak{H}(\mathbf{F}_{d},\mu)$ is closed.

The key ingredient in the proof of Theorem~\ref{main}
is an explicit construction, which might be of independent interest,
of a sequence of groups in the space $\mathcal{G}_{d}$ of
$d$-marked groups with the following properties.

\begin{proposition}\label{sequence}

Let $\mu$ be a non-degenerate probability measure on $\mathbf{F}_{d}$,
$d\ge2$, with finite first moment. Then there exists a sequence of
marked groups $\left((\Gamma_{n},S_{n})\right)_{n=1}^{\infty}$ in
$\mathcal{G}_{d}$ such that:
\begin{description}
\item [{(i)}] The sequence $(\Gamma_{n},S_{n})$ converges to $(\mathbf{F}_{d},{\bf S})$
as $n\to\infty$ in the space of $d$-marked groups.
\item [{(ii)}] The sequence of asymptotic entropies $h(\Gamma_{n},\mu)\to0$
as $k\to\infty$.
\item [{(iii)}] For each $n\in\mathbb{N}$, $\Gamma_{n}$ is non-amenable,
has no nontrivial amenable normal subgroups, and has only countably
many amenable subgroups.
\end{description}
\end{proposition}

The moment condition on $\mu$ is used to bound the asymptotic
entropy. It seems to be an interesting question whether Proposition
\ref{sequence} remains true assuming only that $\mu$ has finite
entropy.

Property (iii) in the statement above implies that the action of $\Gamma_{n}$
on the Poisson boundary of $(\Gamma_{n},\mu)$ is essentially free.
This property is crucial for our purposes. Any sequence of $d$-marked
finite groups with girth growing to infinity would satisfy properties
(i) and (ii), but the Poisson boundaries are trivial for finite groups. 

We construct the sequence of marked groups as stated via taking extensions
of the Fabrykowski-Gupta group. Necessary terminology and background are
reviewed in Section \ref{sec:prelim}. Provided the sequence of marked
groups stated in Proposition \ref{sequence}, the proof of Theorem
\ref{main} is completed by taking suitable diagonal product of groups;
see Section \ref{sec:joining}.

\subsection{Boundary entropies}
A closely related question---and, to our knowledge, a much better
studied one---is that of the spectrum of \emph{Furstenberg entropies.}
Let $(X,\nu)$ be a standard Borel space, equipped with a probability
measure, and on which $G$ acts by measure class preserving transformations.
We say that $(X,\nu)$ is a \emph{$(G,\mu)$-space}, if the measure
$\nu$ is $\mu$-stationary, that is $\mu\ast\nu=\nu$.\emph{ }The
Furstenberg entropy of a $(G,\mu)$-space $(X,\nu)$ is a numerical
invariant defined in \cite{furstenberg1963noncommuting} as

\[
h_{\mu}(X,\nu):=\sum_{g\in G}\mu(g)\int_{X}-\log\frac{\dd g^{-1}\nu}{\dd\nu}\dd\nu.
\]

The \emph{Furstenberg entropy realization problem} asks given $(G,\mu)$,
what is the spectrum of the Furstenberg entropy $h_{\mu}(X,\nu)$,
as $(X,\nu)$ varies over all ergodic $\mu$-stationary actions of
$G$.

We briefly summarize what is known about this problem. In Kaimonovich and
Vershik \cite{kaimanovich1983random} it is shown that $h_{\mu}(X,\nu)\leq h(G,\mu)$.
The Poisson boundary of an induced random walk on a
quotient group $G/N$ is a $(G,\mu)$-space, whose Furstenberg entropy is equal to
the random walk's asymptotic entropy. Hence every realizable random
walk entropy value is also a realizable Furstenberg entropy value.

Nevo \cite{Ne03} shows that whenever $G$ has Kazhdan's property (T) then there
is a constant $c>0$, depending on $(G,\mu)$, such that whenever $h_{\mu}(X,\nu)<c$ then
it in fact vanishes.  In \cite{bowen2010random}, Bowen showed that for
the free group $\mathbf{F}_{d}$, $d\ge2$, and $\mu$ uniform on the
symmetric free generating set ${\bf S}\cup{\bf S}^{-1}$, all values in
$[0,h({\bf F}_{d},\mu)]$ can be realized as the Furstenberg
entropy of an ergodic stationary action of ${\bf F}_{d}$.\footnote{The
approach in \cite{bowen2010random} is to take an ergodic invariant
random subgroup of $G$ and construct an ergodic stationary system
(which can be referred to as a Poisson bundle, using the terminology
introduced in \cite{kaimanovich2005amenability}). The Furstenberg
entropy of this stationary system is then studied by considering
random walk entropies on the coset spaces associated with the
invariant random subgroups.  Recall that an IRS is a Borel probability
measure $\eta$ on the Chabauty space ${\rm Sub}(G)$ of closed subgroups of
$G$, which is invariant under conjugation by $G$.  For further work on
the Furstenberg entropy realization problem using the IRS-Poisson
bundle approach, see
\cite{hartman2015furstenberg,hartman2018furstenberg} and references
therein.}

A particularly important class of $(G,\mu)$-space are the
$(G,\mu)$-\emph{boundaries}. These are the $G$-factors of the Poisson
boundary of $(G,\mu)$, and include the Poisson boundaries of the
induced random walks on quotient groups. For such boundaries, the next result is an analogue of Theorem~\ref{main}.

\begin{theorem}\label{B}

In the setting of Theorem \ref{main}, suppose $(X,\nu)$ is a $\left(\mathbf{F}_{d},\mu\right)$-boundary
such that the action of ${\bf F}_{d}$ is not essentially free. Then
for any $\epsilon>0$, there exists a $\left(\mathbf{F}_{d},\mu\right)$-boundary
$\left(\tilde{X},\tilde{\nu}\right)$ such that 
\[
h(X,\nu)<h(\tilde{X},\tilde{\nu})<h(X,\nu)+\epsilon,
\]
and $(X,\nu)$ is an ${\bf F}_{d}$-factor of $(\tilde{X},\tilde{\nu})$.

\end{theorem}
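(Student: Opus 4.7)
The strategy is to deduce Theorem \ref{B} from Theorem \ref{main} by identifying inside $\mathbf{F}_d$ a proper quotient whose Poisson boundary interposes between $(X,\nu)$ and the Poisson boundary $\Pi$ of $(\mathbf{F}_d, \mu)$.

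Since the action of $\mathbf{F}_d$ on $(X, \nu)$ is not essentially free, $\mathbf{F}_d$-ergodicity of $\nu$ implies that $\nu$-almost every stabilizer $G_x$ is non-trivial, so the stabilizer map $x \mapsto G_x$ pushes $\nu$ forward to a non-trivial invariant random subgroup $\eta$ on $\mathrm{Sub}(\mathbf{F}_d)$. From this data I would produce a proper normal subgroup $N \triangleleft \mathbf{F}_d$ such that $(X, \nu)$ descends to a $(\mathbf{F}_d/N, \bar\mu)$-boundary, where $\bar\mu$ is the pushforward of $\mu$. The natural candidate is $N := \ker(\mathbf{F}_d \curvearrowright (X,\nu))$, and a key preliminary lemma is to show $N \neq \{e\}$; for $\mathbf{F}_d$ I would expect this to follow from the $\mu$-proximality of boundaries combined with the malnormal structure of non-trivial subgroups of a free group.

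Assuming this reduction, set $\Gamma = \mathbf{F}_d/N$ and apply Theorem \ref{main} to produce, for $\delta > 0$ to be chosen, an intermediate quotient $\mathbf{F}_d \twoheadrightarrow \tilde \Gamma \twoheadrightarrow \Gamma$ with
\[
h_{\mathrm{RW}}(\Gamma, \bar\mu) < h_{\mathrm{RW}}(\tilde\Gamma, \bar\mu) < h_{\mathrm{RW}}(\Gamma, \bar\mu) + \delta.
\]
Writing $\Pi_\Gamma$ and $\Pi_{\tilde\Gamma}$ for the respective Poisson boundaries, both are $(\mathbf{F}_d, \mu)$-boundaries fitting into a chain of factor maps $\Pi_{\tilde\Gamma} \twoheadrightarrow \Pi_\Gamma \twoheadrightarrow X$. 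If $h(X, \nu) = h_{\mathrm{RW}}(\Gamma, \bar\mu)$ then $(X, \nu) \cong \Pi_\Gamma$ by the Kaimanovich--Vershik entropy criterion, and I take $(\tilde X, \tilde \nu) := \Pi_{\tilde \Gamma}$, with $\delta < \epsilon$. Otherwise $h(X, \nu) < h_{\mathrm{RW}}(\Gamma, \bar\mu)$, and I would construct $(\tilde X, \tilde \nu)$ as a $\mathbf{F}_d$-equivariant factor of $\Pi_{\tilde\Gamma}$ strictly between $X$ and $\Pi_{\tilde\Gamma}$ with Furstenberg entropy in $(h(X, \nu), h(X, \nu) + \epsilon)$, extracted by a continuity argument applied to an exhausting sequence of finite $\mathbf{F}_d$-invariant refinements of $X$ inside $\Pi_{\tilde\Gamma}$.

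\textbf{Main obstacle.} The decisive difficulty is the first step: showing that non-essential freeness of $(X, \nu)$ as an $\mathbf{F}_d$-boundary forces a non-trivial common kernel for the action. Non-trivial pointwise stabilizers $G_x$ need not share any non-trivial element a priori, and ruling out this pathology seems to require a genuine use of $\mu$-stationarity together with the subgroup structure of $\mathbf{F}_d$. If the kernel $N$ turns out to be trivial in some case, one would instead need a direct boundary-analogue of Theorem \ref{main}, constructed at the level of $\mathbf{F}_d$-invariant $\sigma$-subalgebras of $\Pi$ and accompanied by a quantitative estimate on the Furstenberg entropy of each refinement. A secondary technical point is the interpolation used in the last case, which depends on a continuity property of Furstenberg entropy under increasing chains of boundary factors.
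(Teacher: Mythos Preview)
Your reduction strategy is not the paper's, and both of the gaps you flag are genuine obstacles that the paper's argument bypasses entirely.

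The paper does not attempt to factor $(X,\nu)$ through a quotient group. Instead it works directly with the sequence $(\Gamma_k, S_k)$ from Proposition~\ref{sequence}: given $g \neq e$ with $\nu(\mathrm{Fix}_X(g)) > 0$, choose $k$ so large that $g$ survives in $\Gamma_k$ and $h_{\mathrm{RW}}(\Gamma_k, \mu) < \epsilon$, and take $(\tilde X, \tilde\nu)$ to be the \emph{stationary joining} $(Z_k, \nu \varcurlywedge \eta_k)$ of $(X,\nu)$ with the Poisson boundary $(\Pi_k, \eta_k)$ of $(\Gamma_k, \mu)$. This joining is an $(\mathbf{F}_d,\mu)$-boundary admitting $(X,\nu)$ as a factor. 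The upper bound is immediate from subadditivity of Furstenberg entropy under joinings. The strict lower bound holds because equality would force, via Proposition~\ref{eq} and Lemma~\ref{mp}, $(Z_k, \nu\varcurlywedge\eta_k)=(X,\nu)$, whereas property~(iii) of Proposition~\ref{sequence} makes the $\Gamma_k$-action on $\Pi_k$ essentially free, so $g$ has null fixed set in $Z_k$ but positive-measure fixed set in $X$.

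Your first obstacle---existence of a non-trivial global kernel $N$---is not merely hard to verify: for $\mu$-boundaries of $\mathbf{F}_d$ there is no reason the point stabilizers should share any non-trivial element. Ergodicity only forces the stabilizers to be a.s.\ non-trivial, and quasi-invariance only transports $\mathrm{Fix}(g)$ to $\mathrm{Fix}(hgh^{-1})$; nothing makes a countable intersection of such sets conull. The malnormality heuristic you invoke actually cuts the wrong way: distinct conjugates of a cyclic subgroup in $\mathbf{F}_d$ intersect trivially, which makes a common kernel \emph{less} plausible, not more. Your second obstacle is just as serious and you undersell it: interpolating boundary entropies along a chain of refinements between $h(X,\nu)$ and $h_{\mathrm{RW}}(\tilde\Gamma,\mu)$ is itself a boundary entropy realization statement, precisely the open problem the paper is probing. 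There is no known mechanism preventing the entropies of an exhausting sequence of intermediate boundaries from jumping by more than $\epsilon$ at the first non-trivial step. The stationary joining construction sidesteps both issues by producing $\tilde X$ in one stroke, with the entropy increment controlled directly by $h_{\mathrm{RW}}(\Gamma_k,\mu)$.
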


Note that if an ergodic invariant random
subgroup is not almost surely a normal subgroup, then the corresponding
Poisson bundle is not a quotient of the Poisson boundary of $(G,\mu)$
because of the measure-preserving factor to the invariant random
subgroup.  Hence Bowen's results do not resolve the question for
Furstenberg entropies of $({\bf F}_{d},\mu)$-boundaries, or for
asymptotic random walk entropies.

\subsection{Spectral radii}

The same kind of construction as in the proof of Theorem \ref{main} implies the following result on spectral
radii of symmetric random walks. Recall that the spectral radius of
a $\mu$-random walk on $\Gamma$ is defined as 
\[
\rho(\Gamma,\mu)=\limsup_{2n\to\infty}\mu^{(n)}(id_{\Gamma})^{\frac{1}{2n}},
\]
where $\mu^{(n)}$ is the $n$-fold convolution of $\mu$ with itself.

\begin{theorem}\label{spec}

Let $\mu$ be a symmetric non-degenerate probability measure on the
free group $\mathbf{F}_{d}$, $d\ge2$. Suppose $\Gamma$ is a proper
quotient of ${\bf F}_{d}$. Then for any $\epsilon>0$, there
exists a quotient group $\tilde{\Gamma}$ of $\mathbf{F}_{d}$ such
that $\mathbf{F}_{d}\twoheadrightarrow\tilde{\Gamma}\twoheadrightarrow\Gamma$
and 
\[
\rho(\Gamma,\mu)-\epsilon<\rho(\tilde{\Gamma},\mu)<\rho(\Gamma,\mu).
\]

\end{theorem}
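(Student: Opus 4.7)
The plan is to mirror the proof of Theorem \ref{main}, swapping the entropy subadditivity for three standard facts about the spectral radius of symmetric random walks: (a) quotient monotonicity, $\rho(\tilde\Gamma,\mu)\le\rho(\Gamma,\mu)$ whenever $\tilde\Gamma\twoheadrightarrow\Gamma$; (b) lower semicontinuity of $\rho$ on the Chabauty space $\mathcal{G}_d$; and (c) the strict form of Kesten's amenability criterion, $\rho(\tilde\Gamma,\mu)<\rho(\Gamma,\mu)$ whenever the kernel of $\tilde\Gamma\twoheadrightarrow\Gamma$ is non-amenable. Ingredient (b) is where the symmetry hypothesis enters: the spectral theorem applied to the self-adjoint operator $P_\mu$ on $\ell^2$ gives
\[
\rho(\Gamma,\mu)=\sup_{n\ge1}\bigl(\mu^{(2n)}(1_\Gamma)\bigr)^{1/(2n)},
\]
and Chabauty convergence $(\tilde\Gamma_k,\tilde S_k)\to(\Gamma,\bar{\mathbf{S}})$ yields $\mu^{(2n)}(1_{\tilde\Gamma_k})\to\mu^{(2n)}(1_\Gamma)$ for every fixed $n$ (immediate for finitely supported $\mu$; true in general by truncation), whence $\liminf_k\rho(\tilde\Gamma_k,\mu)\ge\rho(\Gamma,\mu)$.

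It therefore suffices to produce a sequence of marked quotients $(\tilde\Gamma_k,\tilde S_k)\in\mathcal{G}_d$ satisfying (i) $\mathbf{F}_d\twoheadrightarrow\tilde\Gamma_k\twoheadrightarrow\Gamma$ with the second surjection proper; (ii) $(\tilde\Gamma_k,\tilde S_k)\to(\Gamma,\bar{\mathbf{S}})$ in $\mathcal{G}_d$; (iii) $\tilde\Gamma_k$ has no non-trivial amenable normal subgroup. Indeed (a) and (b) then combine to give $\rho(\tilde\Gamma_k,\mu)\to\rho(\Gamma,\mu)$ from below, while the kernel of $\tilde\Gamma_k\twoheadrightarrow\Gamma$ is a non-trivial normal subgroup of $\tilde\Gamma_k$, non-amenable by (iii), so that (c) supplies $\rho(\tilde\Gamma_k,\mu)<\rho(\Gamma,\mu)$ for every $k$. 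Taking $k$ large enough that $\rho(\tilde\Gamma_k,\mu)>\rho(\Gamma,\mu)-\epsilon$ furnishes the required $\tilde\Gamma$.

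The main obstacle is constructing this sequence. Writing $N_\Gamma=\ker(\mathbf{F}_d\twoheadrightarrow\Gamma)$ and $N_k=\ker(\mathbf{F}_d\twoheadrightarrow\Gamma_k)$ for $\Gamma_k$ from Proposition \ref{sequence}, the diagonal product $\mathbf{F}_d/(N_\Gamma\cap N_k)$ used to prove Theorem \ref{main} does \emph{not} qualify: since $(\Gamma_k,S_k)\to(\mathbf{F}_d,\mathbf{S})$, the diagonal quotient Chabauty-converges to $\mathbf{F}_d$ rather than $\Gamma$, and accordingly
\[
\rho\bigl(\mathbf{F}_d/(N_\Gamma\cap N_k),\mu\bigr)\le\rho(\Gamma_k,\mu)\to\rho(\mathbf{F}_d,\mu)<\rho(\Gamma,\mu),
\]
the last strict inequality holding because $N_\Gamma$, as a non-trivial normal subgroup of $\mathbf{F}_d$ with $d\ge 2$, is free of rank at least $2$ and hence non-amenable. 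My plan is instead to re-run the Fabrykowski--Gupta-type construction underlying Proposition \ref{sequence} \emph{relative to $\Gamma$}: build $\tilde\Gamma_k$ as marked extensions of $\Gamma$ by suitable self-similar tree-acting groups, with the defining relations of $\Gamma$ preserved and the additional structural relations imposed deeper and deeper in the tree, so that $(\tilde\Gamma_k,\tilde S_k)\to(\Gamma,\bar{\mathbf{S}})$ in $\mathcal{G}_d$. The Bartholdi--Erschler almost-identities input---applied exactly as in the proof of Proposition \ref{sequence}(iii)---then delivers (iii). No moment assumption on $\mu$ is needed, consistent with the weaker hypotheses of Theorem \ref{spec}.
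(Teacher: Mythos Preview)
Your framework is sound: quotient monotonicity, lower semicontinuity of $\rho$ in the Chabauty topology, and strict Kesten are exactly the right ingredients, and your diagnosis of why the diagonal products $\Gamma\otimes\Gamma_k$ with the markings of Proposition~\ref{sequence} fail is correct. The gap is in the last paragraph. You require a sequence $(\tilde\Gamma_k,\tilde S_k)$ converging to $(\Gamma,\bar{\mathbf S})$ with no nontrivial amenable normal subgroups, and you propose to get it by ``re-running the construction relative to $\Gamma$''. This is not a construction; for an arbitrary proper quotient $\Gamma$ of $\mathbf F_d$ there is no evident way to build such extensions, and nothing in the paper provides one. (A side remark: property (iii) of Proposition~\ref{sequence} comes from Lemma~\ref{free}, which uses the virtual free-product structure of $\Gamma_n$, not the Bartholdi--Erschler almost-identity result; the latter is used only for convergence to $\mathbf F_d$.)

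The key observation you are missing is that you do not need to converge to $\Gamma$ at all: it suffices to converge to any marked group $H$ that covers $\Gamma$ and satisfies $\rho(H,\mu)=\rho(\Gamma,\mu)$. The paper takes $H=\Gamma\otimes G$ where $G$ is the Fabrykowski--Gupta group, marked by some fixed $T$ with $(G,T)$ agreeing with $(\mathbf F_d,\mathbf S)$ on a ball of radius $2|g|_{\mathbf S}$ for a chosen $g\in\ker(\mathbf F_d\to\Gamma)\setminus\{1\}$. The kernel $N_0=\ker(\Gamma\otimes G\to\Gamma)$ embeds in $G$, hence is amenable (since $G$ has subexponential growth), so Kesten gives $\rho(\Gamma\otimes G,\mu)=\rho(\Gamma,\mu)$. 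Now one uses the \emph{same} groups $\Gamma_\ell$ from Section~\ref{sec:sequence}, but marked (via Claim~\ref{claim}) so that $(\Gamma_\ell,S_\ell)$ matches $(G,T)$ on arbitrarily large balls. Then $\tilde\Gamma=\Gamma\otimes\Gamma_\ell$ converges to $\Gamma\otimes G$, and your ingredients (a) and (b) give $\rho(\Gamma\otimes\Gamma_\ell,\mu)\to\rho(\Gamma,\mu)$. The paper carries this out by redoing the quantitative Kesten estimate directly rather than citing semicontinuity abstractly, but the content is the same. Strictness then follows exactly as you say: $\ker(\Gamma\otimes\Gamma_\ell\to\Gamma)$ is a nontrivial normal subgroup of $\Gamma_\ell$, hence non-amenable by Lemma~\ref{free}.
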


Our construction uses a diagonal product of marked groups, and  is similar to the construction
in \cite{KP}. A result of Kassabov and Pak \cite{KP2} states that
the set of the spectral radii $\{\rho(\Gamma,\mu):\Gamma\mbox{ is a quotient of }{\bf F}_{d}\}$
contains a subset homeomorphic to the Cantor set. The same construction
shows that the set $\mathfrak{H}({\bf F}_{d},\mu)$ contains
a subset homeomorphic to the Cantor set as well. It is not known whether
this set of spectral radii is closed.

\subsection*{Acknowledgement}

We thank Michael Bj\"orklund, J\'er\'emie Brieussel, Yair Hartman
and Igor Pak for helpful discussions. 

\section{Preliminaries\label{sec:prelim}}

\subsection{$(G,\mu)$-boundaries\label{subsec:boundaries}}

In this paper we only consider countable groups. A probability measure
$\mu$ on $G$ is non-degenerate if the support of $\mu$ generates $G$
as a semigroup. For a countable group $G$, we say a Lesbesgue space
$(X,\nu)$ is a $G$-space, if $G$ acts measurably on $X$ and the
probability measure $\nu$ is quasi-invariant with respect to the
$G$-action. A $G$-space $(X,\nu)$ is ergodic if every $G$-invariant
subset is either null or conull. A measurable map $\pi:(X,\nu)\to(Y,\eta)$
is called a $G$-\emph{map }if it is $G$-equivariant and $\eta$
is the pushforward of $\nu$ under $\pi$.

Given a probability measure $\mu$ on $G$, let $\Omega=G^{\mathbb{N}}$
be the path space, $\mathbb{P_{\mu}}$ be the law of the $\mu$-random
walk starting at $id$, and $\mathcal{I}$ be the $\sigma$-field
on $\Omega$ that is invariant under time shifts. The Poisson boundary
of $(G,\mu)$ is denoted by the measure space $(B,\mathcal{F},\nu_{B})$
together with a $G$-map ${\bf b}:\left(\Omega,\mathcal{I},\mathbb{P}_{\mu}\right)\to\left(B,\mathcal{F},\nu_{B}\right)$,
where ${\bf b}^{-1}\mathcal{F}=\mathcal{I}$ up to null sets with
respect to $\mathbb{P}_{\mu}$, and the $\sigma$-algebra $\mathcal{F}$
is countably generated and separating points. The existence and uniqueness
up to isomorphism of the Poisson boundary of $(G,\mu)$ was shown
by Furstenberg \cite{furstenberg1963noncommuting,furstenberg1971random,furstenberg1973boundary}.
The $G$-action on the Poisson boundary $(B,\nu_{B})$ is ergodic,
and in fact doubly ergodic, by Kaimanovich \cite{kaimanovich2003double}.

We use the notation $(B,\nu_{B})$ to denote a compact model of the
Poisson boundary of $(G,\mu)$, which exists by the Mackey realization
\cite{mackey1962point}. A $(G,\mu)$-boundary $(X,\nu)$ is defined
to be a $G$-factor of $(B,\nu_{B})$. Moreover, the factor map $\left(B,\nu_{B}\right)\to(X,\nu)$
is essentially unique, see \cite[Theorem 2.14]{badershalom}, and
we will denote it by $\boldsymbol{\beta}_{X}$.

Denote by $P(X)$ the space of Borel probability measures on the compact
space $X$. A factor map $\pi:(Y,\eta)\to(X,\nu)$ gives a unique
disintegration map $D_{\pi}:X\to P(Y)$ such that for $\nu$-a.e.
$x\in X$, $D_{\pi}(x)$ is supported on the fiber of $x$ and $\int_{X}D_{\pi}(x)d\nu(x)=\eta$.
We say $(Y,\eta)$ is a \emph{relatively measure preserving extension}
of $X$ if $D_{\pi}$ is $G$-equivariant, that is $D_{\pi}(g\cdot x)=g\cdot D_{\pi}(x)$.

We will need the following properties regarding Furstenberg entropy
and relatively measure preserving extensions.

\begin{proposition}[{\cite[Proposition 1.9]{nevo2000rigidity}}]\label{eq}

Let $\pi:(Y,\eta)\to(X,\nu)$ be a $G$-factor map. Suppose $h(X,\nu)<\infty$
and $h(Y,\eta)=h(X,\nu)$. Then $(Y,\eta)$ is a relative measure
preserving extension of $(X,\nu)$.

\end{proposition}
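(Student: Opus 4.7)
The plan is a direct disintegration computation combined with Gibbs' inequality. Writing $\eta_x := D_\pi(x)$ so that $\eta = \int_X \eta_x \, d\nu(x)$, the $G$-equivariance of $\pi$ forces $\pi_\ast(g^{-1}\eta) = g^{-1}\nu$, and uniqueness of the disintegration yields
\[
g^{-1}\eta \;=\; \int_X g^{-1}\eta_{gx}\, d(g^{-1}\nu)(x),
\]
so the fiber measure of $g^{-1}\eta$ above $x$ is $g^{-1}\eta_{gx}$, again supported on $\pi^{-1}(x)$. Comparing this with the disintegration of $\eta$ over $\nu$ produces, for $\eta$-a.e.\ $y$, the chain-rule factorization
\[
\frac{d g^{-1}\eta}{d\eta}(y) \;=\; \frac{d g^{-1}\nu}{d\nu}\bigl(\pi(y)\bigr)\cdot\frac{d\, g^{-1}\eta_{g\pi(y)}}{d\eta_{\pi(y)}}(y).
\]

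Substituting this into the definition of $h(Y,\eta)$, taking logarithms and integrating against $\eta$, the expression splits into a base part that is exactly $h(X,\nu)$ (using $\pi_\ast\eta = \nu$) and a fiber part
\[
T \;:=\; \sum_{g \in G}\mu(g)\int_X H\bigl(\eta_x \,\big\|\, g^{-1}\eta_{gx}\bigr)\,d\nu(x),
\]
where $H(P\|Q) = \int -\log(dQ/dP)\,dP$ is the Kullback--Leibler divergence on the fiber $\pi^{-1}(x)$. Thus $h(Y,\eta) = h(X,\nu) + T$; the finiteness hypothesis on $h(X,\nu)$ allows us to subtract, and the equal-entropy hypothesis forces $T = 0$.

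Since each summand of $T$ is non-negative by Gibbs' inequality, its vanishing means $H(\eta_x\|g^{-1}\eta_{gx}) = 0$, i.e.\ $\eta_x = g^{-1}\eta_{gx}$, for every $g \in \mathrm{supp}(\mu)$ and $\nu$-a.e.\ $x$. Rearranging, $D_\pi(gx) = g\cdot D_\pi(x)$ for such $g$. Non-degeneracy of $\mu$ says that $\mathrm{supp}(\mu)$ generates $G$ as a semigroup, and the cocycle identity then propagates by composition to every $g \in G$; this is precisely the condition that $(Y,\eta)$ is a relatively measure preserving extension of $(X,\nu)$.

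The main technical step, and the only non-routine piece, is justifying the chain-rule factorization of $dg^{-1}\eta/d\eta$: it requires quasi-invariance of $\eta$ (so $\eta_x$ and $g^{-1}\eta_{gx}$ lie in the same measure class on $\pi^{-1}(x)$ for $\nu$-a.e.\ $x$), uniqueness of the disintegration, and a careful Fubini manipulation to identify the fiber Radon--Nikodym derivative. Once this is in place, the finiteness of $h(X,\nu)$ rules out any $\infty - \infty$ ambiguity in isolating $T$, and the remainder of the argument reduces to the equality case of Gibbs' inequality.
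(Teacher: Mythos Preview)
The paper does not supply its own proof of this proposition; it is quoted from \cite[Proposition 1.9]{nevo2000rigidity} and used as a black box. Your argument is the standard one and is essentially what Nevo and Zimmer do: disintegrate $\eta$ over $\nu$, use the chain rule to split the Radon--Nikodym cocycle into a base factor and a fiber factor, recognise the resulting fiber term as an average of Kullback--Leibler divergences, and invoke the equality case of Gibbs' inequality. Your identification of the one delicate point---that quasi-invariance of $\eta$ is needed to ensure $\eta_x$ and $g^{-1}\eta_{gx}$ are mutually absolutely continuous on the fiber, so that the KL divergence is finite and the chain rule is valid---is accurate, and the propagation from $\mathrm{supp}(\mu)$ to all of $G$ via semigroup generation (using quasi-invariance of $\nu$ to handle the null sets when composing) is correct. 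One small remark: the proposition as stated in the paper does not explicitly assume $\mu$ is non-degenerate, but this hypothesis is ambient throughout the paper and is indeed needed exactly where you use it.
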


\begin{lemma}[{\cite[Corollary 2.20]{badershalom}}]\label{mp}

Let $\pi:(Y,\eta)\to(X,\nu)$ be a relatively measure-preserving extension
of two $(G,\mu)$-boundaries. Then $(Y,\eta)=(X,\nu)$.

\end{lemma}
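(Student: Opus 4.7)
The plan is to exploit the $\mu$-proximality of $(G,\mu)$-boundaries: for any boundary $(Z,\zeta)$ realized on a compact metric model, the pushforwards $(g_1\cdots g_n)_{\ast}\zeta$ converge weakly-$\ast$ in $P(Z)$ to the Dirac mass $\delta_{\mathbf{z}(\omega)}$ for $\mathbb{P}_\mu$-almost every path $\omega=(g_n)_{n\geq 1}$, where $\mathbf{z}:\Omega\to Z$ is the associated boundary map. This is Furstenberg's martingale-convergence construction applied to the factor map $B\to Z$, and is the standard way boundary points are recovered inside $P(Z)$.

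Applied to both $(X,\nu)$ and $(Y,\eta)$, this produces boundary maps $\mathbf{x}:\Omega\to X$ and $\mathbf{y}:\Omega\to Y$, with $\mathbf{x}=\pi\circ\mathbf{y}$ almost surely by the essential uniqueness of $\boldsymbol{\beta}_X$. The relatively-measure-preserving hypothesis supplies a $G$-equivariant disintegration $D_\pi:X\to P(Y)$, so applying $g_1\cdots g_n$ to the identity $\eta=\int D_\pi(x)\,d\nu(x)$ and invoking equivariance yields
\[
(g_1\cdots g_n)_{\ast}\eta=\int_X D_\pi(x)\,d\bigl((g_1\cdots g_n)_{\ast}\nu\bigr)(x).
\]
Testing against $f\in C(Y)$, the left-hand side becomes $\int f\,d((g_1\cdots g_n)_{\ast}\eta)$, which tends to $f(\mathbf{y}(\omega))$ by the proximality of $(Y,\eta)$, while the right-hand side equals $\int_X F_f\,d((g_1\cdots g_n)_{\ast}\nu)$ with $F_f(x):=\int_Y f\,dD_\pi(x)$. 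If one can pass to the limit through the latter integral and obtain $F_f(\mathbf{x}(\omega))$, then $F_f(\mathbf{x}(\omega))=f(\mathbf{y}(\omega))$ for every continuous $f$, hence $D_\pi(\mathbf{x}(\omega))=\delta_{\mathbf{y}(\omega)}$ almost surely. Since $\mathbf{x}_{\ast}\mathbb{P}_\mu=\nu$, this forces $D_\pi(x)$ to be a Dirac mass for $\nu$-almost every $x$, which makes $\pi$ an essential isomorphism and gives $(Y,\eta)=(X,\nu)$.

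The main obstacle is that $F_f$ is only Borel measurable, so the weak-$\ast$ convergence $(g_1\cdots g_n)_{\ast}\nu\to\delta_{\mathbf{x}(\omega)}$ cannot be pushed through it by continuity alone. I would handle this by a Lusin approximation on the compact model of $X$: for each $\epsilon>0$ and each $f$ in a countable dense subset of $C(Y)$, choose a compact $K_\epsilon\subseteq X$ with $\nu(X\setminus K_\epsilon)<\epsilon$ on which $F_f$ is continuous. Since $\mathbf{x}(\omega)\in K_\epsilon$ with probability at least $1-\epsilon$, and the proximality on $X$ concentrates $(g_1\cdots g_n)_{\ast}\nu$ near $\mathbf{x}(\omega)$, a Portmanteau estimate controls the mass that falls outside $K_\epsilon$ and lets one pass the limit through the integral on this good event. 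Letting $\epsilon\to 0$ promotes the identification to the almost-sure statement and completes the argument.
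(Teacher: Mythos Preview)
The paper does not prove this lemma at all; it is quoted verbatim as \cite[Corollary~2.20]{badershalom} and then used as a black box in the proofs of Theorems~\ref{main} and~\ref{B}. So there is no ``paper's proof'' to compare against, and your proposal should be judged on its own merits.

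Your overall strategy is the right one: equivariance of the disintegration $D_\pi$ combined with $\mu$-proximality of the boundary $(Y,\eta)$ forces $D_\pi(\mathbf{x}(\omega))=\delta_{\mathbf{y}(\omega)}$ almost surely, hence the fibers are points. The computation
\[
(g_1\cdots g_n)_*\eta=\int_X D_\pi(x)\,d\bigl((g_1\cdots g_n)_*\nu\bigr)(x)
\]
is correct and is the heart of the matter.

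The gap is in the Lusin/Portmanteau step. Weak convergence $(g_1\cdots g_n)_*\nu\to\delta_{\mathbf{x}(\omega)}$ only controls integrals of \emph{continuous} test functions; it does \emph{not} force $(g_1\cdots g_n)_*\nu\bigl(K_\epsilon^c\bigr)\to 0$ when $\mathbf{x}(\omega)\in K_\epsilon$, because Portmanteau for the open set $K_\epsilon^c$ only gives the useless lower bound $\liminf\,(g_1\cdots g_n)_*\nu(K_\epsilon^c)\ge 0$. If $\mathbf{x}(\omega)$ lies on the topological boundary of $K_\epsilon$ there is no reason the mass outside $K_\epsilon$ should vanish, so your error term is uncontrolled. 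The sentence ``a Portmanteau estimate controls the mass that falls outside $K_\epsilon$'' is precisely the unjustified step.

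There is a clean fix that bypasses Lusin entirely. Write $Z_n=g_1\cdots g_n$ and observe, by shift-equivariance of the boundary map, that
\[
\int_X F_f\,d(Z_n\cdot\nu)=\mathbb{E}\bigl[F_f(\mathbf{x})\mid g_1,\dots,g_n\bigr]
\quad\text{and}\quad
\int_Y f\,d(Z_n\cdot\eta)=\mathbb{E}\bigl[f(\mathbf{y})\mid g_1,\dots,g_n\bigr].
\]
Your displayed identity shows these two bounded martingales coincide for every $n$. The martingale convergence theorem then gives $F_f(\mathbf{x}(\omega))=f(\mathbf{y}(\omega))$ almost surely for each $f$ in a countable dense set of $C(Y)$, with no continuity assumption on $F_f$ needed. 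From there your conclusion goes through unchanged.
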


\subsection{The space of marked groups and convergence to the free group\label{subsec:chabauty}}

Denote by $\mathcal{G}_{d}$ the space of $d$-generated groups $(G,S)$,
where $S=(s_{1},\ldots,s_{d})$ is a generating tuple, equipped with
the \emph{Cayley-Grigorchuk topology}. 
We refer to the pair $(G,S)$
as a marked group and $\mathcal{G}_{d}$ the space of $d$-marked
groups. Recall that in this topology, two marked groups $(G_{1},S_{1})$
and $(G_{2},S_{2})$ are close if marked balls of large radius in
the Cayley graphs of $(G_{1},S_{1})$ and $(G_{2},S_{2})$ around
the identities are isomorphic. This space is introduced by Grigorchuk in \cite{grigorchuk1984}. 

Denote by $({\bf F}_{d},{\bf S})$ a free group of rank $d$, where
${\bf S}=({\bf s}_{1},\ldots,{\bf s}_{d})$ consists of the free generators.
Let $G$ be a $d$-generated group. Following the definition in Akhmedov
\cite{akhmedov} and Olshanskii-Sapir \cite{olshanskii2009freelike},
we say a non-trivial word $w(x_{1},\ldots,x_{d})$ is a \emph{$d$-almost-identity}
for $G$, if the identity $w(g_{1},\ldots,g_{d})=1$ is satisfied
for any $d$-generating tuple $(g_{1},\ldots,g_{d})$. By \cite[Theorem 9]{olshanskii2009freelike},
there exists a sequence of $d$-markings $(G,S_{k})_{k=1}^{\infty}$
that converges to $({\bf F}_{d},{\bf S})$ in the Cayley-Grigorchuk
topology if $G$ is $d$-generated and satisfies no $d$-almost identity. 

In \cite{abert2005}, Ab\'ert gives a general criterion for a group
to satisfy no identity. Suppose $G\curvearrowright X$ by permutations.
We say $G$ {\it separates} $X$, if for every finite subset $Y$ of $X$,
the pointwise fixator $G_{Y}=\{g\in G:y\cdot g=y\mbox{ for all }y\in Y\}$
has no fixed point outside $Y$. Ab\'ert shows that if $G$ separates
$X$ then $G$ satisfies no identity. Bartholdi and Erschler \cite{bartholdi2015order}
provide a criterion for absence of almost-identities: under the additional
assumption that the Frattini subgroup $\Phi(G)$ has finite index
in $G$, the condition in Ab\'ert's criterion implies that $G$ satisfies
no almost-identity. Recall that the Frattini subgroup of $G$ is the
intersection of all the maximal subgroups of $G$.

Weakly branch groups provide examples of groups satisfying Ab\'ert's
criterion. The notion of weakly branch group is introduced by Grigorchuk
in \cite{grigorchuk2000}. Let $\mathsf{T}$ be a rooted spherical
symmetric tree. For a vertex $u\in\mathsf{T}$, let $C_{u}$ be the
set of infinite rays with prefix $u$. We say a group $G$ acting
by automorphisms on $\mathsf{T}$ is {\it weakly branching} if it acts level transitively and 
the rigid stabilizer ${\rm Rist}_{G}(C_{u})$ of any vertex $u\in\mathsf{T}$ is nontrivial. 
Recall that ${\rm Rist}_{G}(C_{u})=\{g\in G:x\cdot g=x\mbox{ for all }x\notin C_{u}\}$, that is, the set of 
group elements that only move the descendants of $u$.
If $G$ is weakly branching, then $G$
separates the boundary $\partial T$ of the tree, see \cite[Proof of Corollary 1.4]{abert2005}. If in addition, 
the product of rigid stabilizers $\prod_{u\in \mathsf{T}_n} {\rm Rist}_{G}(C_{u})$ is a finite index subgroup of $G$ for every $n$,
then $G$ is said to be a {\it branch group}.

\section{A sequence of marked groups \label{sec:sequence}}

This section is devoted to the proof of Proposition \ref{sequence}.
To fix ideas, we start with the Fabrykowski-Gupta group introduced
in \cite{fabrykowskigupta}. It is a group
acting on the ternary rooted tree $\mathsf{T}$. Encode vertices of $\mathsf{T}$
by finite strings in the alphabet ${0,1,2}$, and the boundary of the
tree by infinite strings in ${0,1,2}$. Denote by $\mathsf{T}_{n}$ the level $n$ vertices of the
rooted tree $\mathsf{T}$ and ${\rm St}_{G}(n)$ the level
$n$ stabilizer, that is, 
${\rm St}_{G}(n)=\{g\in G: u.g=u \text{ for all }u\in\mathsf{T}_{n}\}$.

The Fabrykowski-Gupta group is generated by two elements:
a root permutation $a$ which permutes the three subtrees of the
root cyclically and a directed permutation $b$ which fixes the right
most ray $2^{\infty}$ and is defined recursively by 
\[
b=(a,id,b).
\]
In other words we have for any ray $w\in\{0,1,2\}^{\infty}$,
\begin{align*}
0w\cdot a & =1w,\ 1w\cdot a=2w,\ 2w\cdot a=0w;\\
0w\cdot b & =0(w\cdot a),\ 1w\cdot b=1w,\ 2w\cdot b=2(w\cdot b).
\end{align*}
See Figure~\ref{fig:schreier}. For more background on groups acting on
rooted trees and the notation of wreath recursion see the reference
\cite{handbook}. The group $G=\left\langle a,b\right\rangle $ is
called the Fabrykowski-Gupta group. It is an example of non-torsion
Grigorchuk-Gupta-Sidki (GGS) groups.

\begin{figure}
  \begin{center}
    \includegraphics[scale=0.7]{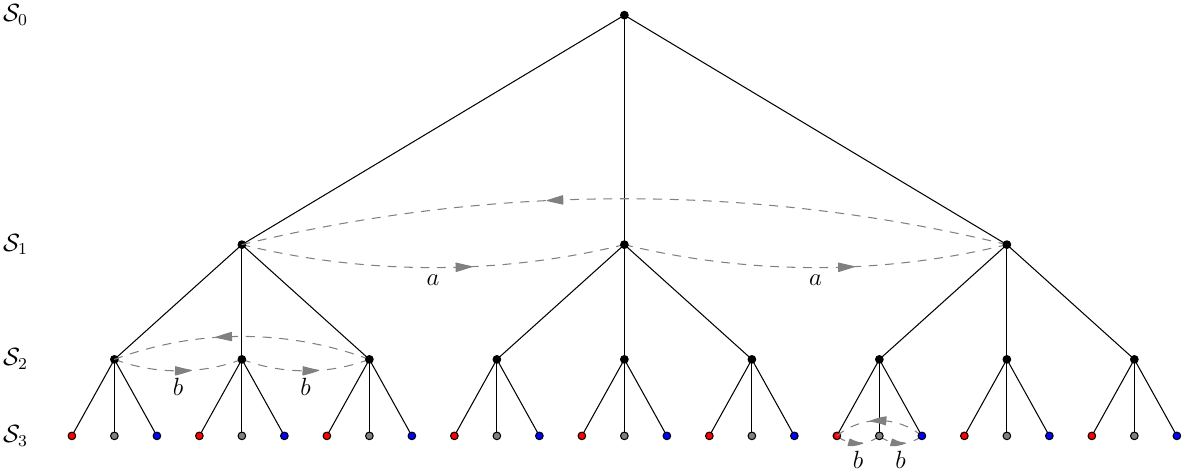}
    
    \caption{\label{fig:schreier}The action of the Fabrykowski-Gupta
      group on the first four levels of the rooted ternary tree. Self
      loops are not depicted. Arrows show the action on the roots of
      subtrees, with corresponding arrows in the rest of the subtree
      not drawn explicitly. The restriction of $\mathcal{S}_3$ to its
      red (and likewise gray and blue) vertices forms a copy of
      $\mathcal{S}_2$. The Schreier graph $\mathcal{S}_3$ is the
      disjoint union of these three graphs, with the addition of the
      three edges labeled by $b$.}
  \end{center}
\end{figure}

The group $G=\left\langle a,b\right\rangle $ is known to have the
following properties:
\begin{itemize}
\item (\cite{bg02}) $G$ is a just infinite branch group which is regularly
branching over its commutator group $[G,G]$.
\item (\cite{fabrykowskiguptaII,bartholdipochon}) $G$ is of intermediate
growth.
\item (\cite{FGU}) $G$ has the congruence subgroup property: every finite
index subgroup of $G$ contains some level stabilizer ${\rm St}_{G}(n)$. 
\end{itemize}

\subsection{Permutation wreath extensions}

Let $G_{n}$ be the quotient group $G/{\rm St}_{G}(n)$,
which acts faithfully and transitively on $\mathsf{T}_{n}$. We
denote by $\bar{a},\bar{b} \in G_n$ the images of the generators $a,b$ under the
quotient map $G \to  G/{\rm St}_{G}(n)$. Consider
the level $n$ Schreier graph $\mathcal{S}_{n}$ with vertex set $\mathsf{T}_{n}$
and edge set $E=\{(x,x\cdot \bar{a}),(x,x\cdot \bar{b}):\ x\in\mathsf{T}_{n}\}$.
It is a finite graph on $3^{n}$ vertices. Consider the permutation
wreath product of the free product ${\bf A}=(\mathbb{Z}/3\mathbb{Z})\ast(\mathbb{Z}/3\mathbb{Z})$
and $G_{n}$ over the set $\mathsf{T}_{n}$, that is, 
\[
\mathbf{A}\wr_{\mathsf{T}_{n}}G_{n}=\left(\oplus_{\mathsf{T}_{n}}{\bf A}\right)\rtimes G_{n},
\]
where $G_{n}$ acts on $\oplus_{\mathsf{T}_{n}}{\bf A}$ by permuting
the coordinates. We write elements of $\mathbf{A}\wr_{\mathsf{T}_{n}}G_{n}$
as pairs $\left(\varphi,g\right)$, where $\varphi\in\oplus_{\mathsf{T}_{n}}{\bf A}$
is regarded as a function $\mathsf{T}_{n}\to\mathbf{A}$ and $g\in G_{n}$. 

Denote by $s$ and $t$ the two standard generators of $\mathbf{A}$,
${\bf A}=\left\langle s,t|s^{3}=t^{3}=1\right\rangle $. Consider
the subgroup $W_{n}$ of $\mathbf{A}\wr_{\mathsf{T}_{n}}G_{n}$ generated
by 
\begin{equation}
a_{n}=\left(id,\bar{a}\right),\ b_{n}=\left(\delta_{2^{n-1}0}^{s}+\delta_{2^{n-1}1}^{id}+\delta_{2^{n}}^{t},\bar{b}\right),\label{eq:Gamman}
\end{equation}
where in the direct sum $\oplus_{\mathsf{T}_{n}}{\bf A}$, $\delta_{x}^{\gamma}$
denotes the function that is $\gamma$ at $x$ and identity elsewhere.
We use additive notation $\delta_{x}^{\gamma_{1}}+\delta_{y}^{\gamma_{2}}$,
$x\neq y$, for the function that is $\gamma_{1}$ at $x$, $\gamma_{2}$
at $y$, and identity elsewhere. 

The choice of $(a_{n},b_{n})$ guarantees:

\begin{lemma}\label{conv}

The sequence $\left(W_{n},(a_{n},b_{n})\right)$ converges to $(G,(a,b))$
in the Cayley-Grigorchuk topology as $n\to\infty$. Indeed, $(W_{n+1},(a_{n+1},b_{n+1}))$
is a marked quotient of $\left(W_{n},(a_{n},b_{n})\right)$, and the
ball of radius $2^{n-2}$ around $id$ in the Cayley graph of $(W_{n},(a_{n},b_{n}))$
coincide with the ball of radius $2^{n-2}$ around $id$ in $(G,(a,b))$. 

\end{lemma}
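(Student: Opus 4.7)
My plan is to analyse $\Gamma_n$ through its embedding into the wreath product $W_n=\bigl(\bigoplus_{\mathsf T_n}\mathbf A\bigr)\rtimes G_n$. A word $w(a_n,b_n)$ evaluates to a pair $(f_w,\bar w)$, where $\bar w\in G_n$ is the image of $w$ and $f_w=\sum_i \pm\,g_{i-1}\cdot\varphi\in\bigoplus_{\mathsf T_n}\mathbf A$ is accumulated over the positions $i$ at which $b_n^{\pm1}$ appears in $w$, with $g_{i-1}\in G_n$ the corresponding prefix product and $\varphi=\delta_{2^{n-1}0}^{s}+\delta_{2^n}^{t}$ supported on the children of the level-$(n-1)$ vertex $2^{n-1}$.

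For part (c), by von Dyck's theorem it suffices to show that for every word $w$ with $|w|\le 2^{n-1}$ one has $w(a_n,b_n)=1_{\Gamma_n}$ if and only if $w(a,b)=1_G$. The direction $\Rightarrow$ uses the standard fact that every non-trivial element of the level stabiliser $\mathrm{St}_G(n)$ in the Fabrykowski-Gupta group has length at least $2^{n-1}$ (by induction on sections and the branching structure), so $\bar w=1_{G_n}$ already forces $w(a,b)=1_G$. The converse requires the decoration $f_w$ to vanish whenever $w(a,b)=1_G$. Here the crucial observation is that the labels $(s,\mathrm{id},t)$ that $\varphi$ assigns to the three children of $2^{n-1}$ mirror exactly the pattern $(a,\mathrm{id},b)$ of the wreath recursion of $b$. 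As a consequence, the contribution to $f_w$ at a level-$n$ vertex $g\cdot 2^{n-1}x$ can be read, inside the free product $\mathbf A$, as a faithful transcript of the first-level action of the section $w|_{g\cdot 2^{n-1}}$ on its child $x$; when $w(a,b)=1_G$ every such section is trivial, and all transcripts collapse to the identity in $\mathbf A$.

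For part (b), another application of von Dyck reduces the claim to showing that $w(a_n,b_n)=1_{\Gamma_n}$ implies $w(a_{n+1},b_{n+1})=1_{\Gamma_{n+1}}$. The subtlety is that $\bar w=1_{G_n}$ does not automatically imply $\bar w=1_{G_{n+1}}$, so the gap must be absorbed by the decoration condition $f_w^{(n)}=0$. The faithful section-transcript from the previous paragraph does exactly this: $f_w^{(n)}=0$ forces every section of $w$ at a level-$(n-1)$ vertex to act trivially at the first level below it, which is the statement that $w\in\mathrm{St}_G(n+1)$, i.e.\ $\bar w=1_{G_{n+1}}$. A parallel computation of $f_w^{(n+1)}$, using the same translation but one level deeper in the tree, yields $f_w^{(n+1)}=0$, giving the desired relation in $\Gamma_{n+1}$.

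Part (a) is then immediate from (c), because coincidence of Cayley balls of radius $2^{n-2}\to\infty$ in the marked generating sets is, by definition, convergence in the Chabauty--Grigorchuk topology. The main obstacle throughout is the decoration-vanishing step at the heart of (c) and (b): one has to convert the recursive wreath-product identities satisfied by elements of $G$ into an explicit cancellation inside the free product $\bigoplus_{\mathsf T_n}\mathbf A$, and this rests delicately on the fact that the $(s,\mathrm{id},t)$-pattern of $\varphi$ is a faithful replica of the $(a,\mathrm{id},b)$-pattern of the wreath recursion of $b$.
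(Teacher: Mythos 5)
Your computation of the decoration $f_w$ is correct and matches the paper's setup, but the crucial ``decoration-vanishing'' step in your direction $\Leftarrow$ has a genuine gap. You assert that the contribution to $f_w$ at each level-$n$ vertex is a \emph{faithful} transcript of the corresponding section of $w$, so that $w(a,b)=1_G$ forces all transcripts to collapse to the identity in $\mathbf{A}$. As stated this would hold for words of arbitrary length, which cannot be right: it would mean every relator of $G$ is a relator of $\Gamma_n$, i.e.\ $\Gamma_n$ is a quotient of the just-infinite group $G$, hence either finite or isomorphic to $G$. But $\Gamma_n$ is infinite of exponential growth (it contains $\oplus_{\mathsf T_n}[\mathbf A,\mathbf A]$ of finite index by Lemma~\ref{virtual}), so it is neither. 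The source of the trouble is that the natural map $\mathbf A\to G$, $s\mapsto a$, $t\mapsto b$, is very far from injective, so a product of $s$'s and $t$'s that is nontrivial in $\mathbf A$ can perfectly well map to $1$ in $G$; ``same pattern as the wreath recursion'' is not enough to make the transcript faithful.

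What actually makes the transcript faithful for short words is a geometric input you never invoke: on the level-$n$ Schreier graph of the Fabrykowski--Gupta group the two marked vertices $2^{n-1}0$ and $2^n$ are at distance $2^{n-1}$ from each other (a fact about Schreier graphs of bounded automaton groups cited from Bondarenko's thesis). Combined with the triangle inequality, this shows that if $|w|\le 2^{n-2}$ then the trajectory contributing to $f_w$ at any vertex can visit at most one of $\{2^{n-1}0,2^n\}$, so each coordinate of $f_w$ lies in $\langle s\rangle$ or $\langle t\rangle$. Only for such single-letter transcripts is the correspondence with $\langle a\rangle$, $\langle b\rangle\subset G$ a genuine bijection (both sides being copies of $\mathbb Z/3\mathbb Z$), and only then does $w(a,b)=1$ force $f_w=0$. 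This one geometric fact does the entire job of both your $\Rightarrow$ and $\Leftarrow$; once you have the identification $(f_w,\bar w)=(\phi_w,\bar w)$ for $|w|\le 2^{n-2}$, the ball coincidence and hence Chabauty convergence follow at once, without needing the separate (and, as far as I know, not standard) claim that nontrivial elements of $\mathrm{St}_G(n)$ have word length at least $2^{n-1}$.

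Your argument for the marked quotient $\Gamma_n\twoheadrightarrow\Gamma_{n+1}$ inherits the same gap, since it again rests on the faithful-transcript claim. The paper instead produces the epimorphism explicitly: identify $\mathbf A\wr_{\mathsf T_{n+1}}G_{n+1}$ with $(\mathbf A\wr_{\{0,1,2\}}\langle a\rangle)\wr_{\mathsf T_n}G_n$, define $\tau\colon\mathbf A\to\mathbf A\wr_{\{0,1,2\}}\langle a\rangle$ by $\tau(s)=(id,a)$ and $\tau(t)=(\delta_0^s+\delta_2^t,id)$, extend coordinatewise, and check that $(\phi,g)\mapsto(\tau(\phi),g)$ sends $a_n\mapsto a_{n+1}$, $b_n\mapsto b_{n+1}$. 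This is shorter, does not depend on any length bound, and is reused in Lemma~\ref{virtual}; I would recommend replacing your part (b) with this construction.
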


\begin{proof}

The Fabrykowski-Gupta group belongs to the class of bounded automaton
groups. Schreier graphs of bounded automaton groups are studied systematically
in Bondarenko's dissertation \cite{bondarenko2007groups}. In particular,
on the finite Schreier graph $\mathcal{S}_{n}$, we have that the
graph distance between the vertices $2^{n-1}0,2^{n}$ satisfy $d_{\mathcal{S}_{n}}(2^{n-1}0,2^{n})=2^{n}-1$.
For more details see \cite[Chapter VI]{bondarenko2007groups}. 

Note that $G$ embeds as a subgroup of $G\wr_{\mathsf{T}_{n}}G_{n}$,
where the embedding is given by the wreath recursion 
\[
a\mapsto(id,\bar{a}),\ b\mapsto\left(\delta_{2^{n-1}0}^{a}+\delta_{2^{n-1}1}^{id}+\delta_{2^{n}}^{b},\bar{b}\right).
\]
Now consider a word $w=w_{1}\ldots w_{\ell}$, where $w_{j}\in\{a^{\pm1},b^{\pm1}\}$
and evaluate this word in $G\wr_{\mathsf{T}_{n}}G_{n}$ by the embedding
above. Denote the image in $G\wr_{\mathsf{T}_{n}}G_{n}$ by $(\phi_w,\bar{w})$. 
For the configuration $\phi_{w}\in\oplus_{\mathsf{T}_{n}}G$,
we have that 
\[
\phi_{w}(x)=\prod_{i=1}^{n}\phi_{w_{i}}(x\cdot w_{1}\ldots w_{i-1}).
\]
It follows from the triangle inequality that if $\ell\le2^{n-2}$,
then the trajectory $\{x,x\cdot w_{1},\ldots,x\cdot w_{1}\ldots w_{\ell-1}\}$
can visit at most one point in the set $\{2^{n-1}0,2^{n}\}$. In particular,
$\phi_{w}(x)$ is an element in either $\left\langle a\right\rangle $
or $\left\langle b\right\rangle $. Thus if we evaluate
the same word $w$ in $W_n$ under $a\mapsto a_{n}$ and $b\mapsto b_{n}$,
the resulting element $\left(\tilde{\phi}_{w},\bar{w}\right)$ can
be identified with $\left(\phi_{w},\bar{w}\right)$ in $G\wr_{\mathsf{T}_{n}}G_{n}$.
Namely, $\phi_{w}$ is obtained from $\tilde{\phi}_{w}$ by replacing
$s$ with $a$ and $t$ with $b$ and vice versa. 

The quotient map from $\left(W_{n},(a_{n},b_{n})\right)$ to $\left(W_{n+1},(a_{n+1},b_{n+1})\right)$
is given as follows. Note that $\mathbf{A}\wr_{\mathsf{T}_{n+1}}G_{n+1}=\left(\mathbf{A}\wr_{\{0,1,2\}}\left\langle a\right\rangle \right)\wr_{\mathsf{T}_{n}}G_{n}$.
Let $\tau:\mathbf{A}\to\mathbf{A}\wr_{\{0,1,2\}}\left\langle a\right\rangle $
be the group homomorphism determined by $\tau(s)=(id,a)$ and $\tau(t)=\left(\delta_{0}^{s}+\delta_{1}^{id}+\delta_{2}^{t},id\right)$.
The homomorphism $\tau$ extends to $\oplus_{\mathsf{T}_{n}}{\bf A}\to\oplus_{\mathsf{T}_{n}}\left(\mathbf{A}\wr_{\{0,1,2\}}\left\langle a\right\rangle \right)$
coordinate-wise, that is $\tau(\phi)(x)=\tau(\phi(x))$, $x\in\mathsf{T}_{n}$.
It follows that the wreath recursion formula in $G$ that the
map 
\begin{align*}
W_{n} & \to W_{n+1}\\
(\phi,g) & \mapsto(\tau(\phi),g)
\end{align*}
is a marked group epimorphism which sends $a_{n}$ to $a_{n+1}$ and $b_{n}$
to $b_{n+1}$. 

\end{proof}

Next we verify that $W_{n}$ is virtually a direct product of free groups:

\begin{lemma}\label{virtual}

The group $W_{n}$ contains $\oplus_{\mathsf{T}_{n}}[{\bf A},{\bf A}]$
as a finite index normal subgroup.

\end{lemma}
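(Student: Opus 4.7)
Write $B := \oplus_{\mathsf{T}_n}\mathbf{A}$, so that $W_n = B \rtimes G_n$. The commutator subgroup of a direct sum of groups is the direct sum of their commutator subgroups, so $\oplus_{\mathsf{T}_n}[\mathbf{A},\mathbf{A}] = [B, B]$; this is characteristic in $B$ and $B$ is normal in $W_n$, so $[B, B]$ is normal in $W_n$ and therefore in $\Gamma_n$. Moreover $W_n/[B,B] \cong (\mathbb{Z}/3)^2 \wr_{\mathsf{T}_n} G_n$ is finite. Hence once the containment $[B, B]\subseteq \Gamma_n$ is established, the quotient $\Gamma_n/[B,B]$ embeds into a finite group, giving both normality and finite index in one stroke. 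The plan therefore reduces to proving this containment, i.e.\ that $(\delta_x^c, \mathrm{id})\in \Gamma_n$ for every $x\in \mathsf{T}_n$ and every $c\in [\mathbf{A},\mathbf{A}]$.

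For the containment, a direct calculation in the semidirect product shows that conjugating a single-vertex element $(\delta_x^c, \mathrm{id})$ by any $(\phi, g)\in\Gamma_n$ produces another single-vertex element, whose support is the translate of $x$ by $g$ in $G_n$, and whose entry is $c$ conjugated by the value $\phi$ takes at the translated vertex. Since $\Gamma_n$ surjects onto $G_n$ (which acts transitively on $\mathsf{T}_n$) and $[\mathbf{A},\mathbf{A}]$ is normal in $\mathbf{A}$, it is enough to exhibit $(\delta_{x_0}^c, \mathrm{id})\in\Gamma_n$ at a single chosen vertex $x_0$ for every $c\in [\mathbf{A},\mathbf{A}]$. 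I take $x_0 = 2^n$, and I plan two steps. First, I would produce the explicit element $(\delta_{x_0}^{[s, t]},\mathrm{id})\in \Gamma_n$: the three $a_n$-conjugates $b_n^{(k)} := a_n^{-k}b_n a_n^{k}$ for $k=0,1,2$ have base configurations $\bar a^{k}\cdot \phi_b$ supported in three disjoint top-level subtrees (rooted at $2,1,0$), so that suitable commutators of products of the $b_n^{(k)}$ (composed with correcting words that trivialize the $G_n$-part) can be arranged to isolate a single entry $[s, t]$ at $x_0$. Second, I would show that the coordinate projection $\pi_{x_0}\colon \Gamma_n \cap B \to \mathbf{A}$, $(\phi, \mathrm{id})\mapsto \phi(x_0)$, is surjective; this follows by tracking the trajectory-based formula for $\phi_w$ from the proof of Lemma~\ref{conv}, which allows placing any desired generator of $\mathbf{A}$ at $x_0$ via a word in $a_n, b_n$ whose image in $G_n$ is trivial. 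Given these two steps, for every $\alpha\in\mathbf{A}$ conjugating $(\delta_{x_0}^{[s,t]},\mathrm{id})$ by $(\phi,\mathrm{id})$ with $\phi(x_0)=\alpha$ produces $(\delta_{x_0}^{\alpha [s,t]\alpha^{-1}},\mathrm{id})\in\Gamma_n$, and the normal closure $N$ of $[s, t]$ in $\mathbf{A}$ satisfies $\mathbf{A}/N \cong \mathbf{A}^{\mathrm{ab}} = (\mathbb{Z}/3)^2$, so $N = [\mathbf{A},\mathbf{A}]$, and every $c\in [\mathbf{A},\mathbf{A}]$ is thereby realized at $x_0$.

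The main obstacle I anticipate is the first of the two steps above: locating words in $a_n, b_n$ that both evaluate to the identity in $G_n$ (equivalently, correspond to elements of $G$ lying in the level-$n$ stabilizer) and whose base configurations concentrate at $x_0$. The branch structure of the Fabrykowski--Gupta group, together with the disjointness of the supports of $\bar a^{k}\cdot \phi_b$, should provide the required words; after that the remaining verifications are routine bookkeeping.
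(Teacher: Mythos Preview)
Your overall strategy is sound and mirrors the paper's: reduce to containment, then produce a single commutator $(\delta_{x_0}^{[s,t]},\mathrm{id})$, use surjectivity of the coordinate projection at $x_0$ to obtain the full normal closure $[\mathbf{A},\mathbf{A}]$ there, and spread by transitivity of $G_n$. The difference is in how the containment is established. The paper proceeds by \emph{induction on $n$}. For $n=1$ it carries out exactly your two steps, made easy by the fact that $\bar b=\mathrm{id}$ in $G_1$ so $b_1\in B$: one computes directly that $[\,b_1 a_1^{-1}b_1 a_1,\;a_1 b_1 a_1^{-1} b_1\,]=(\delta_{2}^{[s,t]},\mathrm{id})$ and checks that the projection of $\Gamma_1\cap B$ at vertex $2$ is all of $\mathbf{A}$. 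For the inductive step, the paper uses the quotient map $\pi:\Gamma_n\to\Gamma_{n+1}$ built from the substitution $\tau:\mathbf{A}\to\mathbf{A}\wr_{\{0,1,2\}}\langle a\rangle$; since the base case shows $\tau([\mathbf{A},\mathbf{A}])\supseteq\oplus_{\mathsf{T}_1}[\mathbf{A},\mathbf{A}]$, any $(\delta_{2^n}^{\sigma},\mathrm{id})\in\Gamma_n$ with $\sigma\in[\mathbf{A},\mathbf{A}]$ maps to $(\delta_{2^{n+1}}^{\gamma},\mathrm{id})\in\Gamma_{n+1}$. This is the algebraic incarnation of $G$ being regularly branching over $[G,G]$, and it eliminates all explicit computation beyond $n=1$.

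Your direct route is viable but heavier, and the place you flag as the obstacle really is one. For $n\ge 2$ you no longer have $b_n\in B$ (since $\bar b\neq\mathrm{id}$ in $G_n$), so the disjoint-support picture you sketch for the $b_n^{(k)}$ breaks down once you start multiplying: the $G_n$-parts $\bar a^{-k}\bar b\,\bar a^{k}$ move the supports around, and both of your steps amount to producing words in $a,b$ that lie in $\mathrm{St}_G(n)$ with prescribed sections at $2^n$. The trajectory formula from Lemma~\ref{conv} controls short words, not words with trivial $G_n$-image, so it does not give step~(2) directly. None of this is insurmountable, but the paper's induction via $\tau$ is precisely what converts the hard general-$n$ computation into the easy $n=1$ case; that is what your plan is missing.
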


\begin{proof}

We proceed by induction on $n$.

As in the proof of Lemma \ref{conv}, let $\tau:\mathbf{A}\to\mathbf{A}\wr_{\{0,1,2\}}\left\langle a\right\rangle $
be the group homomorphism determined by $\tau(s)=(id,a)$ and $\tau(t)=\left(\delta_{0}^{s}+\delta_{1}^{id}+\delta_{2}^{t},id\right)$,
where $a$ is the $3$-cycle $(0,1,2)$. When $n=1$, by definition
$W_{1}$ is generated by $a_{1}=\tau(s)$ and $b_{1}=\tau(t)$. Since
$a_{1}^{-1}b_{1}a_{1}=(\delta_{0}^{id}+\delta_{1}^{t}+\delta_{2}^{s},id)$,
it follows that the projection of $W_{1}\cap\oplus_{\mathsf{T}_{1}}{\bf A}$
to the component over vertex $2$ is ${\bf A}$. Direct calculation
shows that $\left[b_{1}a_{1}^{-1}b_{1}a_{1},a_{1}b_{1}a_{1}^{-1}b_{1}\right]=\left(\delta_{2}^{sts^{-1}t^{-1}},id\right).$
 It follows that $\left[W_{1},W_{1}\right]\cap\oplus_{\mathsf{T}_{1}}{\bf A}$
contains $\left\{ \left(\delta_{2}^{\gamma},id\right):\ \gamma\in\left\langle sts^{-1}t^{-1}\right\rangle ^{\mathbf{A}}\right\} $,
while the normal closure $\left\langle sts^{-1}t^{-1}\right\rangle ^{\mathbf{A}}$
is exactly the commutator subgroup $[{\bf A},{\bf A}]$. Since $\tau(s)$
acts as a $3$-cycle permuting $\mathsf{T}_{1}=\{0,1,2\}$, it follows
that $\left[W_{1},W_{1}\right]\cap\oplus_{\mathsf{T}_{1}}{\bf A}>\oplus_{\mathsf{T}_{1}}[{\bf A},{\bf A}]$.
The quotient group $W_{1}/\oplus_{\mathsf{T}_{1}}[{\bf A},{\bf A}]$
is a subgroup of $({\bf A}/[{\bf A},{\bf A}])\wr_{\mathsf{T}_{1}}\left\langle a\right\rangle $,
which is finite. 

We have shown that $\tau([{\bf A},{\bf A}])$ contains $\oplus_{\mathsf{T}_{1}}[{\bf A},{\bf A}]$
as finite index normal subgroup, which reflects the property that
$G$ is regularly branching over its commutator subgroup. 

Suppose the statement is true for $n$ that $W_{n}$ contains $\oplus_{\mathsf{T}_{n}}[{\bf A},{\bf A}]$
as a finite index normal subgroup. To prove the claim for $n+1$,
it suffices to show that $\left(\delta_{2^{n+1}}^{\gamma},id\right)\in
W_{n+1}$
for any $\gamma\in[{\bf A},{\bf A}]$. Recall the quotient map $\pi:W_{n}\to W_{n+1}$
explained in the proof of Lemma \ref{conv}, where $\mathbf{A}\wr_{\mathsf{T}_{n+1}}G_{n+1}$
is identified with $\left(\mathbf{A}\wr_{\{0,1,2\}}\left\langle a\right\rangle \right)\wr_{\mathsf{T}_{n}}G_{n}$.
By the induction hypothesis, $(\delta_{2^{n}}^{\sigma},id)\in W_{n}$
for any $\sigma\in[{\bf A},{\bf A}]$. Under the quotient map $\pi$,
we have 
\[
\pi\left((\delta_{2^{n}}^{\sigma},id)\right)=\left(\delta_{2^{n}}^{\tau(\sigma)},id\right).
\]
With the map $\tau$ we are back in the situation of the induction
base, where we have shown that $\tau\left(\left[{\bf A},{\bf A}\right]\right)$
contains $\oplus_{\mathsf{T}_{1}}[{\bf A},{\bf A}]$. In particular,
for any $\gamma\in[{\bf A},{\bf A}]$, there is an element $\sigma\in[{\bf A},{\bf A}]$
such that $\tau(\sigma)=(\delta_{2}^{\gamma},id)$. It follows that
$\pi\left((\delta_{2^{n}}^{\sigma},id)\right)=\left(\delta_{2^{n}}^{\tau(\sigma)},id\right)=\left(\delta_{2^{n+1}}^{\gamma},id\right)$,
in particular it is an element of $W_{n+1}$. 

\end{proof}

\subsection{Choices of marked subgroups}
\label{subsec:Choices-of-marked}

We are given a fixed rank $d\in\mathbb{N}$, $d\ge2$. 
Since the Fabrykowski-Gupta
group $G=\left\langle a,b\right\rangle $ is a branch group acting
faithfully on the ternary tree $\mathsf{T}$, by Ab\'ert's criterion
\cite[Theorem 1.1]{abert2005} and its proof, we have that given any $n\in\mathbb{N}$,
and any vertex $v\in\mathsf{T}$, there exist elements $\gamma_{1}^{(n)},\ldots,\gamma_{d}^{(n)}\in{\rm Rist}_{G}(v)$
such that $w\left(a\gamma_{1}^{(n)},b\gamma_{2}^{(n)},\gamma_{3}^{(n)},\ldots,\gamma_{d}^{(n)}\right)\neq id$
for all reduced word $w$ of length $1\le|w|\le n$. 

In what follows we fix the choice of $v$ to be the child of the root
indexed by $1$. For each $n\in\mathbb{N}$, fix a choice of $\gamma_{1}^{(n)},\ldots,\gamma_{d}^{(n)}\in{\rm Rist}_{G}(1)$
such that the tuple $\left(a\gamma_{1}^{(n)},b\gamma_{2}^{(n)},\gamma_{3}^{(n)},\ldots,\gamma_{d}^{(n)}\right)$
do not satisfy any reduced word $w$ of length $|w|\in\left[1,n\right]$. 

\begin{lemma}\label{transitive}

Denote by $H_{n}$ the subgroup of $G$ generated by the first two elements of the tuple chosen
above, 
\[
H_{n}=\left\langle a\gamma_{1}^{(n)},b\gamma_{2}^{(n)}\right\rangle.
\]
Then $H_{n}$ acts level transitively on the rooted ternary tree $\mathsf{T}$. 

\end{lemma}

\begin{proof}
The statement is equivalent to that the Schreier graph on
level $k$ vertices $\mathsf{T}_{k}$ with respect to $\left(a\gamma_{1}^{(n)},b\gamma_{2}^{(n)}\right)$
is connected. 

With respect to the original generators $a,b$, the Schreier graph
$\mathcal{S}_{k}$ with vertex set $\mathsf{T}_{k}$ under the action of
$G$ can be drawn recursively as follows, see \cite[Chapter
  V]{bondarenko2007groups}.  The level $1$ graph $\mathcal{S}_{1}$ on
$\{0,1,2\}$ has (directed) edges $(0,1)$, $(1,2)$, $(2,0)$ labeled by
$a$, and self loops at each vertex labeled by $b$. To draw
$\mathcal{S}_{k+1}$, take three copies of $\mathcal{S}_{k}$, append
letter $0$, $1$, or $2$ to the strings indexing the vertices of
$\mathcal{S}_{k}$ respectively in each copy. Then connect the three
copies by edges $(2^{k-1}00,2^{k-1}01)$, $(2^{k-1}01,2^{k-1}02)$ and
$(2^{k-1}02,2^{k-1}00)$ labeled by $b$. In Figure~\ref{fig:schreier}
this is depicted for $k=2$. There, $\mathcal{S}_3$ is seen to be the
union of three copies of $\mathcal{S}_2$, which are shown with red,
gray and blue vertices, respectively. The three additional edges
connecting them and labeled by $b$ are those in the bottom right of
the figure.

It follows that in $\mathcal{S}_{k}$, if we remove all vertices of
the form $1u$, $u\in\{0,1,2\}^{k-1}$ and edges connecting to such
vertices, the remaining graph is connected. Denote the remaining graph
by $\mathcal{S}_{k}'$. Since $\gamma_{1}^{(n)},\gamma_{2}^{(n)}$
are chosen to be in the rigid stabilizer of vertex $1$, in the subgraph
$\mathcal{S}_{k}'$, we may replace label $a$ by $a\gamma_{1}^{(n)}$
and label $b$ by $b\gamma_{2}^{(n)}$. The element $a\gamma_{1}^{(n)}$
moves $1u$, $u\in\{0,1,2\}^{k-1}$ into the vertex set of $\mathcal{S}_{k}'$,
namely, a string starting with $2$. It follows that the graph on
$\mathsf{T}_{k}$ with respect to $\left(a\gamma_{1}^{(n)},b\gamma_{2}^{(n)}\right)$
is connected. 
\end{proof}

Denote by $\ell_{n}$ the maximal word length of elements in the tuple
chosen, with respect to the original generating set $(a,b)$, that
is, 
\begin{equation}
\ell_{n}=\max\left\{ \left|a\gamma_{1}^{(n)}\right|_{\{a,b\}},\left|b\gamma_{2}^{(n)}\right|_{\{a,b\}},\ldots,\left|\gamma_{d}^{(n)}\right|_{\{a,b\}}\right\} .\label{eq:ln}
\end{equation}

Recall the subgroup $W_{k}$ of $\mathbf{A}\wr_{\mathsf{T}_{k}}G_{k}$,
generated by $\left(a_{k},b_{k}\right)$, as defined in (\ref{eq:Gamman}).
By Lemma \ref{conv}, the ball of radius $2^{k-2}$ around
the identity element in the Cayley graph of $(W_{k},(a_{k},b_{k}))$ coincide with
the ball of radius $2^{k-2}$ around the identity in $(G,(a,b))$. 
In particular, for $k\ge 2+\log_{2}(n\ell_{n})$, 
elements in the tuple $\left(a\gamma_{1}^{(n)},b\gamma_{2}^{(n)},\gamma_{3}^{(n)},\ldots,\gamma_{d}^{(n)}\right)$
have images in $\left(W_{k},(a_{k},b_{k})\right)$ under the identification
of balls of radius $2\ell_{n}$ around the identities. Record the
image tuple of elements in $W_{k}$ as $\left(h_{1}^{(n,k)},\ldots,h_{d}^{(n,k)}\right)$.
Finally, denote by $\Gamma_{n,k}$ the subgroup of
$W_{k}$ generated by the tuple $S_{n,k}=\left(h_{1}^{(n,k)},\ldots,h_{d}^{(n,k)}\right)$.
Note that by our choices, for $k\ge 2+\log_{2}(n\ell_{n}$, the ball of radius $n$ around the identity
in the Cayley graph of $\left(\Gamma_{n,k},S_{n,k}\right)$ is the
same as the ball of radius $n$ in the Cayley graph of the free group
$\left(\mathbf{F}_{d},\mathbf{S}\right)$. 

\subsection{Random walks on $\Gamma_{n,k}$}

Let $\mu$ be a non-denegerate probability measure on the free group
$\mathbf{F}_{d}$. Next we consider the $\mu$-random walk on the
group $\Gamma_{n,k}$ defined in the previous subsection, with $n\ge1$,
$k\gg\log_{2}(n\ell_{n})$. Our goal is to show that the action of $\Gamma_{n,k}$
on the Poisson boundary of $\left(\Gamma_{n,k},\mu\right)$ is essentially
free; and given any $\epsilon>0$, for all $k$ sufficiently large,
the asymptotic entropy of the $\mu$-random walk on $\Gamma_{n,k}$
is smaller than $\epsilon$. 

By \cite[Theorem 5.1]{BHT2017}, for a non-degenerate probability
measure $\mu$ on a countable group $\Gamma$, a sufficient condition
for the action of $\Gamma$ on the Poisson boundary of $(\Gamma,\mu)$
to be essentially free is that 
\begin{description}
\item [{(1)}] $\Gamma$ has only countably many amenable subgroups,
\item [{(2)}] $\Gamma$ does not contain any non-trivial normal amenable
subgroup, in other words, the amenable radical of $\Gamma$ is trivial.
\end{description}
We verify that these two properties are satisfied by $\Gamma_{n,k}$
in the following lemma.

\begin{lemma}\label{free}

For each $n$ and $k\ge2+\log_{2}(n\ell_{n})$, the group $\Gamma_{n,k}$
is non-amenable, has no non-trivial normal amenable subgroup, and
has only countably many amenable subgroups. 

\end{lemma}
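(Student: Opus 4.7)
The plan is to leverage Lemma \ref{virtual}, which exhibits the finite-index normal subgroup $K := \oplus_{\mathsf{T}_n}[\mathbf{A},\mathbf{A}]$ of $\Gamma_n$, together with the observation that $[\mathbf{A},\mathbf{A}]$ is a non-abelian free group. Indeed, $[\mathbf{A},\mathbf{A}]$ is the index-$9$ kernel of the abelianization $\mathbf{A} \twoheadrightarrow (\mathbb{Z}/3)^2$, and it is torsion-free because every torsion element of a free product of finite cyclic groups is conjugate into a vertex group and the vertex groups inject into the abelianization. The Kurosh subgroup theorem then identifies $[\mathbf{A},\mathbf{A}]$ as a free group, and the Euler characteristic computation $\chi(\mathbf{A}) = -\tfrac{1}{3}$, hence $\chi([\mathbf{A},\mathbf{A}]) = -3$, pins down its rank as $4$. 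In particular $\Gamma_n \supseteq [\mathbf{A},\mathbf{A}] \cong F_4$, so $\Gamma_n$ is non-amenable, settling the first assertion.

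For the absence of non-trivial normal amenable subgroups, suppose $N \triangleleft \Gamma_n$ is amenable. Then $N \cap K$ is normal in $K$ and amenable, so its image under projection to each factor of $K = \oplus_x F_4$ is an amenable normal subgroup of a non-abelian free group, hence trivial. Therefore $N \cap K = 1$, which gives $[N,K] \subseteq N \cap K = 1$, i.e.\ $N \subseteq C_{\Gamma_n}(K)$. I would finish by verifying $C_{W_n}(K) = 1$: a direct computation of conjugation in $\mathbf{A} \wr_{\mathsf{T}_n} G_n$ shows that commuting with every generator $(\delta_x^\gamma,\mathrm{id})$ of $K$ forces the $G_n$-component $h$ of $(\phi,h) \in C_{W_n}(K)$ to fix every $x \in \mathsf{T}_n$ (otherwise the commutation relation degenerates to $\gamma = 1$ for all $\gamma \in [\mathbf{A},\mathbf{A}]$), whence $h = \mathrm{id}$ by faithfulness, and then $\phi(x) \in C_{\mathbf{A}}([\mathbf{A},\mathbf{A}])$ for each $x$. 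The latter centralizer is trivial because centralizers of non-trivial elements in the virtually free group $\mathbf{A}$ are virtually cyclic and cannot contain the rank-$4$ free group $[\mathbf{A},\mathbf{A}]$.

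For the third assertion, the key reduction is that every amenable subgroup of $\Gamma_n$ is finitely generated. Any amenable subgroup $A \leq K$ projects to a cyclic subgroup on each coordinate of $K = \oplus_x F_4$, so $A$ is abelian of rank at most $|\mathsf{T}_n|$, hence polycyclic and finitely generated. For a general amenable $H \leq \Gamma_n$, the subgroup $H \cap K$ is then finitely generated, while $H/(H \cap K)$ embeds into the finite group $\Gamma_n/K$, so $H$ itself is finitely generated. As $\Gamma_n$ is countable, it has only countably many finite subsets, hence only countably many finitely generated subgroups, and a fortiori only countably many amenable subgroups. The only step that requires genuine care is the centralizer computation in the wreath product; everything else follows mechanically from Lemma \ref{virtual} together with standard facts about free products of finite cyclic groups.
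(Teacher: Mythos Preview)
Your proof is correct and, like the paper's, rests on Lemma \ref{virtual} and the fact that $[\mathbf{A},\mathbf{A}]$ is a non-abelian free group. The treatments of non-amenability and of the countability of amenable subgroups are essentially the same as the paper's (the paper simply asserts that having only countably many amenable subgroups passes to finite extensions, which is exactly your finitely-generated reduction in compressed form).

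The genuine difference is in the ``no non-trivial normal amenable subgroup'' step. The paper argues the contrapositive: given a non-trivial normal $N$, it shows directly that $N\cap K\neq 1$ by analysing the coset slices $S_N(g)=\{\phi:(\phi,g)\in N\}$ and using conjugation by elements of $K$ to force $S_N(g)=\emptyset$ for $g\neq id$ and $S_N(id)=\{id\}$ whenever $S_N(id)\cap K$ is trivial. You instead start from an amenable normal $N$, deduce $N\cap K=1$ via the projections (amenable normal in $F_4$ is trivial), and then finish with the centralizer computation $C_{W_n}(K)=1$. Your route is more conceptual and avoids the case analysis on $g\in G_n$; the paper's route is more hands-on but yields the slightly stronger intermediate statement that \emph{every} non-trivial normal subgroup of $\Gamma_n$ meets $K$ non-trivially (hence is non-amenable), not merely that amenable normal subgroups are trivial. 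Both reach the same conclusion with comparable effort.
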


\begin{proof}

We first introduce some notations. Given a subgroup $H$ of $\mathbf{A}\wr_{\mathsf{T}_{k}}G_{k}=\left(\oplus_{\mathsf{T}_{k}}{\bf A}\right)\rtimes G_{k}$,
for $g\in G_{k}$, let
\[
S_{H}(g):=\left\{ \phi\in\oplus_{\mathsf{T}_{k}}{\bf A}:\ (\phi,g)\in H\right\}.
\]
Then $S_{H}(id_{G_{k}})$ is a subgroup of $\oplus_{\mathsf{T}_{k}}{\bf A}$.
For $g\in G_k$, $S_{H}(g)$ is either empty, or a right coset of $S_{H}(id_{G_{k}})$ in
$\oplus_{\mathsf{T}_{k}}{\bf A}$. Denote by $\pi_{k}$ the natural
projection $\mathbf{A}\wr_{\mathsf{T}_{k}}G_{k}\to G_{k}$. 

As shown in the proof of Lemma \ref{transitive}, in the Schreier
graph on vertex set $\mathsf{T}_{k}$ with respect to the generating
tuple of $H_{n}$, there is a path connecting the vertex $2^{k}$
to the vertex $2^{k-1}0$, which does not visit any vertex starting
with letter $1$. It then follows by the definition of the generating
tuple $S_{n,k}=\left(h_{1}^{(n,k)},\ldots,h_{d}^{(n,k)}\right)$ of
$\Gamma_{n,k}$ that the set of values at the coordinate index by $2^{k}$ is all $\bf A$, 
that is,
$$\{\phi(2^k):\phi \in\cup_{g\in G_{k}}S_{\Gamma_{n,k}}(g)\}=\bf A$$
Note that this implies that $S_{\Gamma_{n,k}}\left(id_{G_{k}}\right)$
is non-amenable. Indeed, otherwise the free product $\bf A$ can be written as a union of finitely many 
right cosets of an amenable subgroup, which contradicts the fact that $\bf A$ is non-amenable. 
For each $x\in\mathsf{T}_{k}$, write $\theta_{x}$ for the projection
of $\oplus_{\mathsf{T}_{k}}{\bf A}$ to the $x$-coordinate, that is,
$\theta_x(\phi)=\phi(x).$ Then the reasoning above shows 
that the projection of $\Gamma_{n,k}\cap\left(\oplus_{\mathsf{T}_{k}}[{\bf A},{\bf A}]\right)$ under 
$\theta_{2^k}$ is non-amenable. Recall that $[{\bf A},{\bf A}]$ is a free group, 
By Lemma \ref{transitive}, the action of $\pi_{k}\left(\Gamma_{n,k}\right)$
is transitive on $\mathsf{T}_{k}$. It follows then 
$\theta_x\left(\Gamma_{n,k}\cap\left(\oplus_{\mathsf{T}_{k}}[{\bf A},{\bf A}]\right)\right)$
is a free group of rank at least $2$ for every vertex in $\mathsf{T}_{k}$.

Let $N$ be a normal subgroup of $\Gamma_{n,k}$, $N\neq\{id\}$. We need to
show that $N$ is non-amenable. Note that for each $x\in\mathsf{T}_{k}$,
$\theta_{x}\left(N\cap\oplus_{\mathsf{T}_{k}}[{\bf A},{\bf A}]\right)$
a normal subgroup of $\theta_{x}\left(\Gamma_{n,k}\cap\left(\oplus_{\mathsf{T}_{k}}[{\bf A},{\bf A}]\right)\right)$,
while the latter is a free group of rank at least $2$. Thus if on
the contrary $N$ is amenable, then $N\cap\oplus_{\mathsf{T}_{k}}[{\bf A},{\bf A}]=\{id\}$.
Since ${\bf A}/[{\bf A},{\bf A}]$ is finite, such trivial intersection
implies that $S_{N}(id_{G_{k}})$ is finite. 

We now argue that $S_{N}(id_{G_{k}})$ being a finite group contradicts
with the condition that $N$ is a non-trivial normal subgroup of $\Gamma_{n,k}$.
Note that since $G_{k}$ is finite, $S_{N}(id_{G_{k}})$ being finite
implies that $N$ is finite. On the other hand, since for each $x\in\mathsf{T}_{k}$,
$\theta_{x}\left(\Gamma_{n,k}\cap\left(\oplus_{\mathsf{T}_{k}}[{\bf A},{\bf A}]\right)\right)$
is a free group of rank at least $2$, it follows that for any element
$h\in\Gamma_{n,k}$, $h\neq id$, its conjugacy class is infinite.
Therefore $S_{N}(id_{G_{k}})$ being a finite group implies that $N=\{id\}$.
We conclude that a nontrivial normal subgroup of $\Gamma_{n,k}$ is
non-amenable. 

Since $[{\bf A},{\bf A}]$ is a free group, the only amenable subgroups
are the trivial group and the cyclic groups. It follows that the direct
sum $\oplus_{\mathsf{T}_{n}}[{\bf A},{\bf A}]$ has only countably
many amenable subgroups. The property of having only countably many
amenable subgroups is clearly preserved under taking finite extensions
and taking subgroups. Thus by Lemma \ref{virtual}, $\Gamma_{n,k}$
has only countably many amenable subgroups. 

\end{proof}

To bound the asymptotic entropy from above, we simply use the well-known
``fundamental inequality'', see e.g.\ \cite{BHM2008}. More precisely,
let $\mu$ be a probability measure on $\mathbf{F}_{d}$ with finite
first moment, $\pi:\mathbf{F}_{d}\to\Gamma$ an epimorphism. Let $S=\pi({\bf S})$
be the induced marking on $\Gamma$ and $\bar{\mu}=\pi\circ\mu$ be
the pushforward of $\mu$. The fundamental inequality implies that
\[
h(\Gamma,\bar{\mu})\le v_{\Gamma,S}\cdot\ell_{\Gamma,\bar{\mu}},
\]
where $v_{\Gamma,S}$ and $\ell_{\Gamma,\bar{\mu}}$ are asymptotic
volume growth rate and asymptotic speed with respect to generating
set $S$:
\[
v_{\Gamma,S}=\lim_{r\to\infty}\frac{1}{r}\log V_{\Gamma,S}(r)\mbox{ and }\ell_{\Gamma,\bar{\mu}}=\lim_{n\to\infty}\frac{1}{n}\sum_{g\in\Gamma}|g|_{S}\bar{\mu}^{(n)}(g).
\]
By sub-additivity, we have $\ell_{\Gamma,\bar{\mu}}\le\sum_{g\in\Gamma}|g|_{S}\bar{\mu}(g)\le\sum_{g\in\mathbf{F}_{d}}|g|_{\mathbf{S}}\mu(g).$
Thus the asymptotic entropy can be bounded by 
\begin{equation}
h\left(\Gamma,\bar{\mu}\right)\le v_{\Gamma,S}\sum_{g\in\mathbf{F}_{d}}|g|_{\mathbf{S}}\mu(g).\label{eq:fund}
\end{equation}
The estimate (\ref{eq:fund}) is the only place where the moment condition
on $\mu$ is needed. 

By Lemma \ref{conv}, the ball of radius $2^{k-2}$ around $id$ in
the Cayley graph of $(W_{k},(a_{k},b_{k}))$ coincide with the ball
of radius $2^{k-2}$ around $id$ in $(G,(a,b))$. It follows by sub-multiplicity
of the volume growth function that the asymptotic volume rate satisfy
\[
v_{W_{k},(a_{k},b_{k})}\le\frac{1}{2^{k-2}}\log V_{G,(a,b)}\left(2^{k-2}\right).
\]
Recall the maximal length $\ell_{n}$ defined in (\ref{eq:ln}). By
comparing lengths of generators, we have that for the subgroup $\Gamma_{n,k}$
of $W_{k}$ satisfies 
\[
v_{\Gamma_{n,k},S_{n,k}}\le\ell_{n}v_{W_{k},(a_{k},b_{k})}.
\]

\subsection{Proof of Proposition \ref{sequence}}

We are now ready to prove Proposition \ref{sequence} stated in the
Introduction.

\begin{proof}[Completion of proof of Proposition \ref{sequence}]

Let $\mu$ be a non-degenerate probability measure on ${\bf F}_{d}$
given, where $d\ge2$ and $\mu$ is of finite first moment. For each
$n\in\mathbb{N}$, choose $k_{n}\gg\log_{2}(n\ell_n)$ such that $\Gamma_{n,k_{n}}$
is defined as in subsection \ref{subsec:Choices-of-marked} and moreover
\[
\frac{\ell_{n}}{2^{k_{n}-2}}\log V_{G,(a,b)}\left(2^{k_{n}-2}\right)\le\frac{1}{n}.
\]
This is possible because the Fabrykowski-Gupta group $G$ has sub-exponential
volume growth, that is, 
\[
v_{G,(a,b)}=\lim_{r\to\infty}\frac{1}{r}\log V_{G,(a,b)}(r)=0,
\]
 Now we verify that the sequence $\left(\Gamma_{n,k_{n}},S_{n,k_{n}}\right)$
satisfy the properties stated. 
\begin{description}
\item [{(i)}] By construction, the generating tuple $S_{n,k_{n}}$ do not
satisfy any reduced word $w$ of length $|w|\in\left[1,n\right]$. 
\item [{(ii)}] The fundamental inequality (\ref{eq:fund}) implies that
with respect to the marking $({\bf F}_{d},{\bf S})\to(\Gamma_{n,k_{n}},S_{n,k_{n}})$,
\[
h\left(\Gamma_{n,k_{n}},\mu\right)\le v_{\Gamma_{n,k_{n}},S_{n,k_{n}}}\sum_{g\in\mathbf{F}_{d}}|g|_{{\bf S}}\mu(g)\le\frac{1}{n}\sum_{g\in\mathbf{F}_{d}}|g|_{{\bf S}}\mu(g).
\]
Thus the sequence of asymptotic entropies converge to $0$ as $n\to\infty$.
\item [{(iii)}] This property is shown in Lemma \ref{free}.
\end{description}
The proof of Proposition \ref{sequence} is complete.

\end{proof}

\begin{remark}

For $d\ge3$, in the proof of Proposition \ref{sequence} one can
use the first Grigorchuk group $G_{012}=\left\langle a,b,c\right\rangle $
introduced in \cite{grigorchuk1980,grigorchuk1984} instead of the
Fabrykowski-Gupta group. Recall that $G_{012}$ acts on the rooted
binary tree. Then one can consider the permutational wreath extension
${\bf B}\wr_{\mathsf{T}_{n}}G_{n}$, where $G_{n}=G/{\rm St}_{G}(n)$
and ${\bf B}=(\mathbb{Z}/2\mathbb{Z})\ast\left(\mathbb{Z}/2\mathbb{Z}\times\mathbb{Z}/2\mathbb{Z}\right)=\left(\left\langle {\bf s}\right\rangle \right)\ast\left(\left\langle {\bf t}\right\rangle \times\left\langle {\bf u}\right\rangle \right)$.
Similar to the sequence of extensions $\Gamma_{n}$, set 
\[
H_{n}=\left\langle a_{n},b_{n},c_{n}\right\rangle <{\bf B}\wr_{\mathsf{T}_{3n}}G_{3n}
\]
where the generators are defined as 
\[
a_{n}=(id,\bar{a}),\ b_{n}=\left(\delta_{1^{3n}}^{{\bf t}}+\delta_{1^{3n-1}0}^{{\bf s}},\bar{b}\right),\ c_{n}=\left(\delta_{1^{3n}}^{{\bf u}}+\delta_{1^{3n-1}0}^{{\bf s}},\bar{c}\right).
\]
Similar proof as in this section with $\Gamma_{n}$ replaced by $H_{n}$
shows that for $d\ge3$, the statement Proposition \ref{sequence}
is true under the weaker assumption that $\mu$ has finite $\alpha_{0}$-moment
and finite entropy, where $\alpha_{0}$ is the exponent in the growth
upper bound $v_{G_{012}}(r)\lesssim e^{r^{\alpha_{0}}}$ from \cite{bartholdi1998,muchnikpak},
$\alpha_{0}\approx0.7674$. 

We choose to take extensions of the Fabrykowski-Gupta group $G$ here
because the resulting groups are $2$-generated, which allows to cover
the case $d=2$. It is remarked in \cite{francoeurgarrido} that all
maximal subgroups of $G$ are of finite index, which would imply that
there is a sequence of marking $S_{k}$ on $G$ such that $(G,S_{k})$
converges to the free group $\left(\mathbf{F}_{d},\mathbf{S}\right)$
when $k\to\infty$ by \cite{bartholdi2015order}. Since we could not
find a written proof of this statement, in this section we produce
tuples of elements of $G$, which a priori do not necessarily generate
$G$, where only Ab\'ert's criterion \cite{abert2005} is invoked. 

\end{remark}

\section{stationary joinings and proof of the main results\label{sec:joining}}

Let $(X,\nu)$ and $(Y,\eta)$ be two $\mu$-stationary $G$-spaces.
Following \cite{furstenbergglasner2010stationary}, we say a probability
measure $\lambda$ on $X\times Y$ is a stationary joining of $\nu$
and $\eta$ if it is $\mu$-stationary and its marginals are $\nu$
and $\eta$ respectively. 

In this section we focus on the situation where both stationary systems
are $(G,\mu)$-boundaries. We use notations introduced in Section
\ref{subsec:boundaries}. Denote by $(B,\nu_{B})$ a compact model
of the Poisson boundary of $(G,\mu)$. Let $(X,\nu)$ and $(Y,\eta)$
be compact models of two $(G,\mu)$-boundaries and denote by $\boldsymbol{\beta}_{X}$
and $\boldsymbol{\beta}_{Y}$ the corresponding maps from the Poisson
boundary $(B,\nu_{B})$ to $(X,\nu)$ and $(Y,\eta)$. Consider the
map 
\begin{align*}
\boldsymbol{\beta}_{X}\times\boldsymbol{\beta}_{Y}:B & \to X\times Y\\
b & \mapsto\left(\boldsymbol{\beta}_{X}(b),\boldsymbol{\beta}_{Y}(b)\right),
\end{align*}
and denote by $Z$ the range $\left(\boldsymbol{\beta}_{X}\times\boldsymbol{\beta}_{Y}\right)(B)$
and $\nu\varcurlyvee\eta$ the pushforward of the harmonic measure
$\nu_{B}$ under $\boldsymbol{\beta}_{X}\times\boldsymbol{\beta}_{Y}$.
Then it's clear by definition that $(Z,\nu\varcurlyvee\eta)$ is
a $G$-factor of the Poisson boundary $\left(B,\nu_{B}\right)$, in
other words, it is a $(G,\mu)$-boundary. The $G$-space $(Z,\nu\varcurlyvee\eta)$
is the unique stationary joining of the $\mu$-boundaries $(X,\nu)$
and $(Y,\eta)$, see \cite[Proposition 3.1]{furstenbergglasner2010stationary}.

On the level of groups, given two $d$-marked groups $(G_{1},S_{1})$
and $(G_{2},S_{2})$, one can take their diagonal product, denoted
by $\left(G_{1}\otimes G_{2},S\right)$, as the subgroup of $G_{1}\times G_{2}$
generated by 
\[
S=\left(\left(s_{1}^{(1)},s_{1}^{(2)}\right),\ldots\left(s_{d}^{(1)},s_{d}^{(2)}\right)\right),
\]
where $S_{i}=\left(s_{1}^{(i)},\ldots,s_{d}^{(i)}\right)$, $i=1,2$.
This operation on two groups corresponds to taking stationary joinings
of the Poisson boundaries:

\begin{lemma}

Let $\mu$ be a probability measure on ${\bf F}_{d}$. The Poisson
boundary of $\left(G_{1}\otimes G_{2},\mu\right)$ is the stationary
joining of the Poisson boundaries of $(G_{1},\mu)$ and $(G_{2},\mu)$. 

\end{lemma}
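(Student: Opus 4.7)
The plan is to identify the Poisson boundary of $(G_1\otimes G_2,\mu)$ with the stationary joining $(Z,\zeta):=(B_1\varcurlywedge B_2,\nu_{B_1}\varcurlywedge\nu_{B_2})$ via a two-stage argument: first produce a canonical $G$-factor map from the Poisson boundary $(B,\nu_B)$ of $(G,\mu)$ onto $(Z,\zeta)$, where $G:=G_1\otimes G_2$, then promote this factor to an isomorphism using a Furstenberg-entropy rigidity argument.

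For the first stage, I would observe that the coordinate projections $p_i:G\twoheadrightarrow G_i$ push $\mu$ forward to the induced walks on $G_i$, so each Poisson boundary $(B_i,\nu_{B_i})$ is naturally a $(G,\mu)$-boundary by pullback. By the universality of the stationary joining of $\mu$-boundaries recalled above (\cite[Proposition 3.1]{furstenbergglasner2010stationary}), the space $(Z,\zeta)$ is itself a $(G,\mu)$-boundary, and so there is an essentially unique $G$-factor map $\Pi:(B,\nu_B)\to(Z,\zeta)$ that coincides with $\boldsymbol{\beta}_{B_1}\times\boldsymbol{\beta}_{B_2}$ when $Z$ is realized as the image inside $B_1\times B_2$.

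For the second stage I would apply Proposition~\ref{eq} followed by Lemma~\ref{mp}. Since $h_\mu(B)=h_{\mathrm{RW}}(G,\mu)$ is finite by the fundamental inequality (\ref{eq:fund}) (using the finite first moment of $\mu$), Proposition~\ref{eq} guarantees that any $G$-factor $B\to Z$ satisfying $h_\mu(Z)=h_\mu(B)$ must be relatively measure-preserving, and Lemma~\ref{mp} then forces $\Pi$ to be an isomorphism of $(G,\mu)$-boundaries. The lemma therefore reduces to proving the entropy equality $h_\mu(Z)=h_\mu(B)$.

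The entropy equality will be the main obstacle, and I would approach it via path-space. The $\mu$-random walk $(g_n)$ on $G$ is realized inside $G_1\times G_2$ as the pair $(g_n^{(1)},g_n^{(2)})$, and because $G\hookrightarrow G_1\times G_2$, the two coordinate trajectories determine one another. Consequently the shift-invariant $\sigma$-algebra of $(g_n)$—which encodes the Poisson boundary—coincides with that of the paired walk. Each coordinate walk converges a.s.\ to an exit point $b_i\in B_i$, with joint distribution $\zeta$. The subtle point is to show that this pair of exits already generates the full shift-invariant $\sigma$-algebra, i.e.\ that no shift-invariant event is hidden in the coupling between the coordinate walks beyond $(b_1,b_2)$. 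I expect this to follow either from Kaimanovich's conditional entropy criterion by verifying $H(g_n\mid b_1,b_2)=o(n)$ using the individual Poisson-boundary identifications of each coordinate tail, or from a direct ray/strip construction that exploits the fact that for i.i.d.\ paired increments the product tail is driven by the marginal exits.
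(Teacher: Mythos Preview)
Your route is genuinely different from the paper's, and it can be made to work, but with two caveats.

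First, the comparison. The paper does not pass through entropy at all. Instead it shows directly that $(Z,\nu_1\varcurlywedge\nu_2)$ is \emph{maximal} among $(G_1\otimes G_2,\mu)$-boundaries: given an arbitrary $(G_1\otimes G_2,\mu)$-boundary $(Y,\eta)$, it forms the spaces $Y_i=Y//K_i$ of $K_i$-ergodic components, where $K_i=\ker(G_1\otimes G_2\to G_i)$, checks that $(Y_i,\eta_i)$ is a $(G_{3-i},\mu)$-boundary, and uses $K_1\cap K_2=\{id\}$ together with uniqueness of stationary joinings of $\mu$-boundaries to embed $(Y,\eta)$ as a factor of $(Z,\nu_1\varcurlywedge\nu_2)$. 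This structural argument needs no moment or entropy hypothesis on $\mu$, which matches the lemma as stated. Your approach, by contrast, invokes Proposition~\ref{eq} and hence needs $h_{\mathrm{RW}}(G_1\otimes G_2,\mu)<\infty$; this is not assumed in the lemma (you smuggle in a finite first moment), so your proof establishes strictly less.

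Second, on the entropy equality itself: your sketch is on the right track and can be completed in one line once you commit to the conditional-entropy criterion. Since $g_n$ is determined by the pair $(g_n^{(1)},g_n^{(2)})$, subadditivity and monotonicity of conditional entropy give
\[
H\bigl(g_n\,\big|\,b_1,b_2\bigr)\;\le\;H\bigl(g_n^{(1)}\,\big|\,b_1\bigr)+H\bigl(g_n^{(2)}\,\big|\,b_2\bigr)\;=\;o(n),
\]
the last equality being precisely the Kaimanovich--Vershik characterization of $(B_i,\nu_{B_i})$ as the Poisson boundary of $(G_i,\mu)$ under $H(\mu)<\infty$. This yields $h_\mu(Z)=h_\mu(B)$ and your Lemma~\ref{mp} step goes through. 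So the ``main obstacle'' you flag is in fact short; you should write it out rather than leave it as an expectation, and you should state the finite-entropy hypothesis explicitly. The paper's argument buys you the lemma in full generality and avoids the entropy machinery altogether.
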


\begin{proof}

Denote by $(B_{i},\nu_{i})$ the Poisson boundary of $(G_{i},\mu)$,
$i=1,2$ and regard them as $G_{1}\otimes G_{2}$-spaces. Denote by
$(Z,\nu_{1}\varcurlyvee\nu_{2})$ the stationary joining of $(B_{1},\nu_{1})$
and $(B_{2},\nu_{2})$ as above and $\pi_{i}:Z\to B_{i}$ the projections.
We need to show $(Z,\nu_{1}\varcurlyvee\nu_{2})$ is the maximal
$(G_{1}\otimes G_{2},\mu)$-boundary. 

Let $(Y,\eta)$ be a $(G_{1}\otimes G_{2},\mu)$-boundary. Denote
by $K_{i}$ the subgroup of $G_{1}\otimes G_{2}$ which consists of
elements that project to identity in $G_{i}$, that is,
\[
K_{i}=\left\{ (g_{1},g_{2})\in G_{1}\times G_{2}:\ (g_{1},g_{2})\in G_{1}\otimes G_{2},\ g_{i}=id_{G_{i}}\right\} .
\]
Denote by $Y_{i}=Y//K_{i}$ the space of $K_{i}$-ergodic components
of $Y$ and $\eta_{i}$ the pushforward of the measure $\eta$ under
the $K_{i}$-factor map $Y\to Y//K_{i}$. Since $(Y,\eta)$ is an
ergodic $G_{1}\otimes G_{2}$-space and $K_{1}\cap K_{2}=\{id\}$,
we have that $Y$ can be viewed as a subset of $Y_{2}\times Y_{1}$.
It's easy to see that by definition of $K_{i}$ that $G_{1}\otimes G_{2}/K_{2}\simeq G_{1}$.
It follows that $(Y_{2},\eta_{2})$ is a $(G_{1},\mu)$-boundary.
Denote by ${\bf \beta}_{Y_{2}}$ the boundary map from $(B_{1},\nu_{1})$
to $(Y_{2},\eta_{2})$. In the same way we have $(Y_{1},\eta_{1})$
is a $(G_{2},\mu)$-boundary and denote by ${\bf \beta}_{Y_{1}}:(B_{2},\nu_{2})\to(Y_{1},\eta_{1})$
the boundary map. By uniqueness of stationary joinings of $\mu$-boundaries,
we have that $(Y,\eta)=(Y_{2}\times Y_{1},\eta_{2}\varcurlyvee\eta_{1})$.
It follows that $(Y,\eta)$ is a factor of $(Z,\nu_{1}\varcurlyvee\nu_{2})$,
where the boundary map is given by $z\mapsto\left({\bf \beta}_{Y_{2}}\circ\pi_{1}(z),{\bf \beta}_{Y_{1}}\circ\pi_{2}(z)\right)$. 

\end{proof}

With the sequence of marked groups provided by Proposition \ref{sequence},
we are now ready to complete the proofs of Theorem \ref{main} and
\ref{B}.

\begin{proof}[Proof of Theorem \ref{main}]

Denote by $(B,\nu_{B})$ the Poisson boundary of $({\bf F}_{d},\mu)$.
Let $\left((\Gamma_{k},S_{k})\right)_{k=1}^{\infty}$ be a sequence
marked groups provided by Proposition \ref{sequence}. Denote by $(\Pi_{k},\eta_{k})$
the Poisson boundary of $(\Gamma_{k},\mu)$. Since $(\Gamma_{k},S_{k})$
can be identified with a projection $\pi_{k}:{\bf F}_{d}\to\Gamma_{k}$,
we regard $(\Pi_{k},\eta_{k})$ as a $({\bf F}_{d},\mu)$-space, where
the ${\bf F}_{d}$-action factors through $\pi_{k}$.

Since $\Gamma$ is a proper quotient of ${\bf F}_{d}$, $N=\ker(\pi:{\bf F}_{d}\to\Gamma)$
is nontrivial. Fix a choice of element $g\in N$, $g\neq id$. Choose
an index $k\in\mathbb{N}$ sufficiently large such that the balls
of radius $2|g|_{{\bf S}}$ around identities in $(\Gamma_{k},S_{k})$
and $({\bf F}_{d},{\bf S})$ coincide and $h(\Gamma_{k},\mu)<\epsilon$.
Take $\tilde{\Gamma}$ to be the diagonal product $\left(\Gamma\otimes\Gamma_{k},S\right)$.
Then 
\[
h(\Gamma\otimes\Gamma_{k},\mu)\le h(\Gamma,\mu)+h(\Gamma_{k},\mu)<h(\Gamma,\mu)+\epsilon.
\]
Since $g$ acts trivially on the Poisson boundary of $(\Gamma,\mu)$
but acts freely on $(\Pi_{k},\nu_{k})$, it follows that $(\Pi_{k},\nu_{k})$
is not a ${\bf F}_{d}$-factor of the Poisson boundary of $(\Gamma,\mu)$.
By Lemma \ref{mp}, we conclude that $h(\Gamma\otimes\Gamma_{k},\mu)>h(\Gamma,\mu)$. 

\end{proof}

\begin{proof}[Proof of Theorem \ref{B}]

The proof is similar to Theorem \ref{main}. Since $(X,\nu)$ is assumed
to be a $({\bf F}_{d},\mu)$-boundary where the action of ${\bf F}_{d}$
is not essentially free, we can choose an element $g\in{\bf F}_{d}$,
$g\neq1$, such that $\nu({\rm Fix}_{X}(g))>0$. Choose an index $k\in\mathbb{N}$
sufficiently large such that the balls of radius $2|g|_{{\bf S}}$
around identities in $(\Gamma_{k},S_{k})$ and $({\bf F}_{d},{\bf S})$
coincide and $h(\Gamma_{k},\mu)<\epsilon$. Take the stationary
joining $\left(Z_{k},\nu\varcurlyvee\eta_{k}\right)$ of $(X,\nu)$
and $(\Pi_{k},\eta_{k})$. By the general inequality, we have 
\[
h(Z_{k},\nu\varcurlyvee\eta_{k})\le h(X,\nu)+h(\Pi_{k},\eta_{k})\le h(X,\nu)+\epsilon.
\]
It remains to show that $h(Z_{k},\nu\varcurlyvee\eta_{k})>h(X,\nu)$.
Suppose on the contrary equality holds, then by Lemma \ref{mp}, the
equality would imply $\left(Z_{k},\nu\varcurlyvee\eta_{k}\right)=(X,\nu)$.
However the action of $\Gamma_{k}$ on $(\Pi_{k},\eta_{k})$ is essentially
free, which implies $\nu\varcurlyvee\eta_{k}\left({\rm Fix}_{Z_{k}}(g)\right)=0$,
contradicting $\nu({\rm Fix}_{X}(g))>0$.

\end{proof}

We now show an analogous result on spectral radii stated as Theorem
\ref{spec} in the Introduction. Consider a symmetric non-degenerate
probability measure $\mu$ on $\Gamma$. In \cite{kesten1959full,kesten1959symmetric}
Kesten proved the following theorem: let $\mu$ be a symmetric non-degenerate
probability measure on $\Gamma$ and $N$ be a normal subgroup of
$\Gamma$, then the following are equivalent:
\begin{description}
\item [{(i)}] $\rho(\Gamma,\mu)=\rho(\Gamma/N,\mu)$,
\item [{(ii)}] $N$ is amenable.
\end{description}
Given a proper quotient $\Gamma$ of ${\bf F}_{d}$ and $\epsilon>0$,
to prove Theorem \ref{spec} we take $\tilde{\Gamma}$ to be a diagonal
product $\Gamma\otimes H$, for some appropriate choice of $H$ similar
to the groups used in Theorem \ref{main}. 

\begin{proof}[Proof of Theorem \ref{spec}]

Let $\epsilon>0$ be a constant given. Let $\Gamma$ be a proper quotient
of ${\bf F}_{d}$ and fix a choice of $g_0\in\ker({\bf F}_{d}\to\Gamma)$,
$g_0\neq id$. Take $n\ge 2|g_0|_{{\bf S}}$. As in the beginning of Subsection \ref{subsec:Choices-of-marked},
fix a choice of $d$-tuple of elements in the Fabrykowski-Gupta
group $G$, $T=\left(a\gamma_{1}^{(n)},b\gamma_{2}^{(n)},\gamma_{3}^{(n)},\ldots,\gamma_{d}^{(n)}\right)$
that do not satisfy any reduced word of length at most $n$. 
Denote the group generated by this tuple by $G_n$.
Take first the diagonal product $\Gamma\otimes G_n$. By the choice of $g$
and marking on $G$ we have that $N_{0}=\ker(\Gamma\otimes G_n\to\Gamma)$
is non-trivial. Note that $N_0$ can be regarded as a normal subgroup of $G_n$. 

Denote by $(W_{\ell})$ a $\mu$-random walk on ${\bf F}_{d}$. For a
marked group $(H,S)$, we write $\pi_{H}$ for the quotient map ${\bf F}_{d}\to H$
when the marking is clear from the context.

Take a small constant $\epsilon_{1}>0$, choose $\ell$ large enough
such that 
\[
\mathbb{P}\left(\pi_{\Gamma}\left(W_{\ell}\right)=id_{\Gamma}\right)\ge((1-\epsilon_{1})\rho(\Gamma,\mu))^{\ell}.
\]
For $g\in N_{0}$, set 
\[
Q(g)=\frac{\mathbb{P}(\pi_{\Gamma\otimes G}(W_{\ell})=g)}{\mathbb{P}(\pi_{\Gamma}(W_{\ell})=id_{\Gamma})}.
\]
Then $Q$ is a symmetric probability measure on $N_{0}$. Equip $N_{0}$
with the induced metric $|\cdot|_{T}$ from $(G_n,T)$. Let $R$ be
a sufficiently large radius such that $Q(\{\gamma\in N_{0}:|\gamma|_{T}>R\})\le\epsilon_{1}$.
Truncate the measure $Q$ at $R$ and let 
\[
Q_{R}(g)=\frac{1}{Q\left(\{\gamma:|\gamma|_{T}\le R\}\right)}Q(g){\bf 1}_{\{|g|_{T}\le R\}}.
\]
Since $N_{0}$ is a subgroup of the Fabrykowski-Gupta group $G$, thus amenable, there
exists an integer $m$ such that 
\[
Q_{R}^{2m}\left(id_{N_{0}}\right)\ge(1-\epsilon_{1})^{2m}.
\]

With $\ell,m,R$ chosen as above, for a sufficiently large index $k$, to
be specified shortly, and take the marked group $\left(\Gamma_{n,k}, S_{n,k}\right)$
defined in Subsection \ref{subsec:Choices-of-marked}. 
Consider the diagonal product $\Gamma\otimes\Gamma_{n,k}$. 
By Lemma \ref{conv}, the
ball of radius $2^{k-2}$ around $id$ in the Cayley graph of $(W_{k},(a_{k},b_{k}))$
coincide with the ball of radius $2^{k-2}$ around $id$ in $(G,(a,b))$. Choose $k$ sufficiently large such that 
$k>2mR$.

Now we follow the original argument in Kesten's theorem $(ii)\Rightarrow(i)$
above to show $\rho(\Gamma\otimes\Gamma_{n,k},\mu)>\rho(\Gamma,\mu)-\epsilon$.
Write $W_{(k-1)\ell}^{k\ell}=W_{(k-1)\ell}^{-1}W_{k\ell}$.
\begin{multline*}
\mathbb{P}\left(\pi_{\Gamma\otimes\Gamma_{n,k}}\left(W_{2\ell m}\right)=id_{\Gamma\otimes\Gamma_{n,k}}\right)\\
\ge\mathbb{P}\left(\cap_{k=1}^{2m}\left\{
\pi_{\Gamma}\left(W_{(k-1)\ell}^{k\ell}\right)=id_{\Gamma},\left|\pi_{\Gamma_{n,k}}\left(W_{(k-1)\ell}^{k\ell}\right)\right|_{S_{n,k}}\le
R\right\}
 \cap\left\{ \pi_{\Gamma_{\ell}}\left(W_{2\ell m}\right)=id_{\Gamma_{n,k}}\right\} \right)\\
\ge((1-\epsilon_{1})\rho(\Gamma,\mu))^{2\ell m}(1-\epsilon_{1})^{2m}Q_{R}^{2m}\left(id_{N_{0}}\right)\ge(1-\epsilon_{1})^{2mn+4m}\rho^{2mn}.
\end{multline*}
Choose $\epsilon_{1}<\epsilon/3$, we have that $\rho(\Gamma\otimes\Gamma_{n,k},\mu)>(1-\epsilon)\rho(\Gamma,\mu)$. 

Finally, by Lemma \ref{free}, $\Gamma_{n,k}$ has
no nontrivial amenable normal subgroups. Since by the choice of markings
$\ker(\Gamma\otimes G_{n,k}\to\Gamma)$ is nontrivial, 
the kernel is non-amenable. By Kesten's theorem $(i)\Rightarrow(ii)$, we conclude
that $\rho(\tilde{\Gamma},\mu)<\rho(\Gamma,\mu)$. 

\end{proof}

\bibliographystyle{plain}
\bibliography{refs}

\end{document}